\definecolor{deepgreen}{cmyk}{1,0,1,0.5}
\newcommand{\Del}[1]{}
\numberwithin{equation}{section}
\newtheorem{theorem}{Theorem}[section]
\newtheorem{lemma}[theorem]{Lemma}
\newtheorem{proposition}[theorem]{Proposition}
\newtheorem{remark}[theorem]{Remark}
\renewcommand{\hbar}{{\underline h}}
\newcommand{\bbN}{\mathbb N}
\newcommand{\bbR}{\mathbb R}
\newcommand{\calA}{\mathcal A}
\newcommand{\calB}{\mathcal B}
\newcommand{\calC}{\mathcal C}
\newcommand{\calD}{\mathcal D}
\newcommand{\calF}{\mathcal F}
\newcommand{\calH}{\mathcal H}
\newcommand{\calI}{\mathcal I}
\newcommand{\calJ}{\mathcal J}
\newcommand{\calK}{\mathcal K}
\newcommand{\calM}{\mathcal M}
\newcommand{\calR}{\mathcal R}
\newcommand{\calT}{\mathcal T}
\newcommand{\calY}{\mathcal Y}
\newcommand{\calZ}{\mathcal Z}
\newcommand{\tilu}{{\tilde{u}}}
\newcommand{\tilv}{{\tilde{v}}}
\newcommand{\wtilK}{{\widetilde{K}}}
\newcommand{\hatf}{{\hat{f}}}
\newcommand{\ud}{\mathrm{d}}
\newcommand{\px}{\partial_x}
\newcommand{\pt}{\partial_t}
\newcommand{\hf}{\frac{1}{2}}
\newcommand{\thf}{\frac{3}{2}}
\DeclareMathOperator{\sech}{sech}
\DeclareMathOperator{\cosech}{cosech}
\newcommand{\R}{{\mathbb R}}
\newcommand{\N}{{\mathbb N}}
\newcommand{\eps}{{\varepsilon}}
\begin{document}


\title[Soliton dynamics for the 1D quadratic KG equation with symmetry]{Soliton dynamics for the 1D quadratic Klein-Gordon equation with symmetry}

\author{Yongming Li}
\address{Department of Mathematics \\ Texas A\&M University \\ College Station, TX 77843, USA}
\email{liyo0008@tamu.edu}

\author{Jonas L\"uhrmann}
\address{Department of Mathematics \\ Texas A\&M University \\ College Station, TX 77843, USA}
\email{luhrmann@math.tamu.edu}

\thanks{
J. L\"uhrmann was partially supported by NSF grant DMS-1954707. 
}

\begin{abstract}
We establish the conditional asymptotic stability in a local energy norm of the unstable soliton for the one-dimensional quadratic Klein-Gordon equation under even perturbations.
A key feature of the problem is the positive gap eigenvalue exhibited by the linearized operator around the soliton.
Our proof is based on several virial-type estimates, combining techniques from the series of works \cite{KMM17, KMM17_short, KMM19, KMMV20, KM22}, and an explicitly verified Fermi Golden Rule.
The approach hinges on the fact that even perturbations are orthogonal to the odd threshold resonance of the linearized operator.
\end{abstract}

\maketitle 

\tableofcontents

\section{Introduction}

\subsection{Main result}

We consider the one-dimensional quadratic Klein-Gordon equation
\begin{equation} \label{equ:quadratic_KG}
 (\pt^2 - \px^2 +1) \phi = \phi^2, \quad (t,x) \in \bbR \times \bbR.
\end{equation}
The model enjoys space-time translation invariance and invariance under Lorentz transformations.
Its solutions formally conserve the energy
\begin{equation*}
 E = \int_\bbR \biggl( \frac12 (\px \phi)^2 + \frac12 (\pt \phi)^2 + \frac12 \phi^2 - \frac13 \phi^3 \biggr) \, \ud x.
\end{equation*}
Introducing $\bm{\phi} = (\phi, \pt \phi) = (\phi_1, \phi_2)$, we can also write~\eqref{equ:quadratic_KG} as a first-order system 
\begin{equation} \label{equ:quadratic_KG_system}
 \left\{ \begin{aligned}
  \pt \phi_1 &= \phi_2, \\
  \pt \phi_2 &= -(-\px^2 + 1) \phi_1 + \phi_1^2.
 \end{aligned} \right.
\end{equation}
Local well-posedness for arbitrary finite energy data follows from a standard fixed-point argument and the global existence of solutions for initial data with small energy can be deduced using the conservation of energy.
Observe that for even initial data the parity is preserved under the flow of~\eqref{equ:quadratic_KG_system}.
In this work we consider the dynamics of even solutions $\bm{\phi}$ to~\eqref{equ:quadratic_KG_system} in the vicinity of the even soliton solution $\bm{Q} = (Q,0)$ with
\begin{equation*}
 Q(x) = \frac{3}{2} \sech^2\Bigl(\frac{x}{2}\Bigr).
\end{equation*}
More precisely, we study the (conditional) asymptotic stability properties of the soliton $\bm{Q}$ under small even perturbations.
Correspondingly, we decompose
\begin{equation*}
 \bm{\phi} = \bm{Q} + \bm{\varphi}
\end{equation*}
and write~\eqref{equ:quadratic_KG_system} in terms of $\bm{\varphi}=(\varphi_1, \varphi_2)$ as
\begin{equation} \label{equ:linearized_KG_system}
 \left\{ \begin{aligned}
  \pt \varphi_1 &= \varphi_2, \\
  \pt \varphi_2 &= -L \varphi_1 + \varphi_1^2.
 \end{aligned} \right.
\end{equation}
The linearized operator $L$ around the soliton is given by
\begin{equation} \label{equ:linearized_operator}
 L = -\px^2 - 2Q + 1 = - \partial_x^2 - 3 \sech^2\Bigl(\frac{x}{2}\Bigr) + 1.
\end{equation}
It features the classical Schr\"odinger operator $H = - \partial_x^2 - 3 \sech^2(\frac{x}{2})$, which is a rescaled version of the third member in the hierarchy of reflectionless Schr\"odinger operators with P\"oschl-Teller potentials~\cite{PoschlTeller} given by $H_\ell := -\px^2 - \ell (\ell+1) \sech^2(x)$, $\ell \in \bbN$. Their discrete spectra can be determined explicitly, see for instance \cite[Chapter 4.19]{Titchmarsh_Part1}.
The operator $L$ has essential spectrum $[1,\infty)$ and exhibits an even ($L^2$-normalized) eigenfunction $Y_0$ with negative eigenvalue 
\begin{align*}
Y_0(x) &= c_0 \sech^3\Bigl(\frac{x}{2}\Bigr) , \qquad L Y_0 = -\nu^2 Y_0, \qquad c_0 := \sqrt{\frac{15}{32}}, \qquad \nu^2 = \frac54,\\
\intertext{ an odd ($L^2$-normalized) eigenfunction $Y_1$ with zero eigenvalue,}
Y_1(x) &= c_1 \sech^2\Bigl(\frac{x}{2}\Bigr) \tanh\Bigl(\frac{x}{2}\Bigr) , \qquad L Y_1 = 0, \qquad c_1 := \sqrt{\frac{15}{8}}, \\
\intertext{an even ($L^2$-normalized) eigenfunction $Y_2$ with a positive gap eigenvalue,}
Y_2(x) &= c_2 \sech^3\Bigl(\frac{x}{2}\Bigr) \biggl( 1 - 4 \sinh^2\Bigl(\frac{x}{2}\Bigr) \biggr), \qquad L Y_2 = \mu^2 Y_2, \qquad c_2 := \sqrt{\frac{3}{32}}, \qquad \mu^2 = \frac34, \\
\intertext{and an odd threshold resonance}
 Y_3(x) &= \tanh\Bigl(\frac{x}{2}\Bigr) - \frac52 \sech^2\Bigl(\frac{x}{2}\Bigr) \tanh\Bigl(\frac{x}{2}\Bigr), \qquad L Y_3 = Y_3.
\end{align*}
The eigenfunction $Y_1 \simeq Q'$ is related to the invariance under spatial translations and is referred to as the translational mode.
Since we only consider even perturbations $\bm{\varphi}$ of the soliton, the odd translational mode $Y_1$ is not relevant for our analysis.

The negative eigenvalue of the linearized operator introduces an exponentially unstable mode for the dynamics in the neighborhood of the soliton. Indeed, the linear subsystem of~\eqref{equ:linearized_KG_system} given by
\begin{equation} \label{equ:linearized_KG_subsystem}
 \left\{ \begin{aligned}
  \pt \varphi_1 &= \varphi_2, \\
  \pt \varphi_2 &= - L \varphi_1,
 \end{aligned} \right.
\end{equation}
has the exponentially growing solution 
\begin{equation*}
 \bigl( e^{\nu t} Y_0, \nu e^{\nu t} Y_0 \bigr).
\end{equation*}
One may therefore only hope to establish a conditional asymptotic stability result for the soliton~$\bm{Q}$.
The positive gap eigenvalue of the linearized operator, also called an internal mode, presents a significant obstacle for this.
In fact, the linear subsystem~\eqref{equ:linearized_KG_subsystem} admits the time-periodic solution 
\begin{equation*}
 \bigl( \sin(\mu t) Y_2, \mu \cos(\mu t) Y_2 \bigr),
\end{equation*}
which does not decay. However, it may still be possible for (certain) solutions to the nonlinear system~\eqref{equ:linearized_KG_system} to decay via a nonlinear resonant damping mechanism. The latter should depend on a (nonlinear) resonance condition, which is often referred to as a Fermi Golden Rule.

\medskip

Our main result establishes for even perturbations of the soliton~$\bm{Q}$ that stability in the energy space $H^1(\bbR) \times L^2(\bbR)$ implies asymptotic stability in a spatially localized energy norm.
In what follows, a global solution to~\eqref{equ:quadratic_KG_system} is understood to be a function $\bm{\phi} \in \calC([0,\infty); H^1 \times L^2)$ that satisfies~\eqref{equ:quadratic_KG_system} for all $t \geq 0$.

\begin{theorem} \label{thm:codim_asymptotic_stability}
 There exists $0 < \delta \ll 1$ such that if a global even solution $\bm{\phi}$ to~\eqref{equ:quadratic_KG_system} satisfies
 \begin{equation} \label{equ:thm_assumption_closeness}
  \bigl\| \bm{\phi}(t) - \bm{Q} \bigr\|_{H^1(\bbR) \times L^2(\bbR)} \leq \delta \quad \text{for all } \, t \geq 0, 
 \end{equation}
 then for any bounded interval $I \subset \bbR$, we have 
 \begin{equation*}
  \lim_{t\to\infty} \, \bigl\| \bm{\phi}(t) - \bm{Q} \bigr\|_{H^1(I) \times L^2(I)} = 0.
 \end{equation*}
\end{theorem}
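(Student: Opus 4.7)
My plan follows the Kowalczyk--Martel--Mu\~noz strategy~\cite{KMM17, KMM17_short, KMM19, KMMV20} adapted to the presence of the gap eigenvalue, combined with an explicit Fermi Golden Rule verification that exploits the reflectionless structure of $L$. Rewriting \eqref{equ:linearized_KG_system} as a first-order Hamiltonian system, I would decompose
\[
 \bm{\varphi}(t) = a_+(t)\, \bm{\calY}_+ + a_-(t)\, \bm{\calY}_- + z_1(t)\, \bm{\calY}_2^{(1)} + z_2(t)\, \bm{\calY}_2^{(2)} + \bm{u}(t),
\]
where $\bm{\calY}_\pm$ are the $\pm\nu$-eigenvectors of the linearized matrix operator built from $Y_0$, the pair $\bm{\calY}_2^{(1,2)}$ spans the two-dimensional invariant subspace associated with eigenvalues $\pm i \mu$ built from $Y_2$, and $\bm{u}$ lies in the symplectic-orthogonal continuous-spectral subspace. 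Because only even functions appear, the odd modes $Y_1$ and $Y_3$ are automatically absent and no modulation parameter is needed. The scalar equation for $a_+$ has the form $\dot a_+ = \nu a_+ + (\text{quadratic})$; the uniform-in-time smallness assumption~\eqref{equ:thm_assumption_closeness} is incompatible with any net exponential growth, so integrating the ODE forward in time gives the \emph{trapping estimate}
\[
 |a_+(t)| \lesssim \int_t^\infty e^{-\nu(s-t)} \bigl( |z(s)|^2 + \|\bm{u}(s)\|_{\mathrm{loc}}^2 \bigr) \, ds,
\]
and $a_+$ may thereafter be treated as a quadratic remainder.

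For the internal mode, set $z = z_1 + i z_2$; the spectral projection gives $\dot z = i \mu z + \angles{\varphi_1^2}{Y_2} + O(|\bm{\varphi}|^3)$. Since $2\mu = \sqrt{3}$ exceeds the threshold $1$ of the essential spectrum of $\sqrt L$, the quadratic self-interaction of the internal mode resonates with the continuous spectrum. Following~\cite{KMMV20, KM22}, I would introduce a Poincar\'e-normal-form correction $Z = z + O(|z|^2 + \|\bm{u}\|_{\mathrm{loc}}^2)$ and derive an effective damping law
\[
 \frac{d}{dt} |Z(t)|^2 \leq -\Gamma |Z(t)|^4 + (\text{local-energy error}),
\]
with Fermi Golden Rule constant $\Gamma = \pi |\angles{Y_2^2}{\Psi_{2\mu}}|^2$, where $\Psi_{2\mu}$ is the distorted Fourier mode of $L$ at spectral parameter $4\mu^2 = 3$. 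Because $H = -\px^2 - 3\sech^2(x/2)$ is a rescaled reflectionless P\"oschl--Teller operator, its Jost solutions admit closed-form expressions, so the inner product $\angles{Y_2^2}{\Psi_{2\mu}}$ can be computed by hand and shown to be nonzero, confirming $\Gamma > 0$.

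For the radiation $\bm{u}$ I would construct two localized virial-type functionals in the spirit of~\cite{KMM17, KMM19, KMMV20}: a Martel-type weighted-energy virial on $\bm{u}$, and a ``transferred'' virial obtained after an intertwining/conjugation step that peels off both the soliton potential and the $Y_2$ direction. The crucial structural input is that the odd threshold resonance $Y_3$ is automatically orthogonal to every even function; it is exactly this orthogonality that restores coercivity of the resulting quadratic form on the even continuous-spectral subspace (modulo the $Y_0$ and $Y_2$ directions, which are absorbed via the trapping estimate and the FGR bound, respectively). Combining the two virial estimates with the damping law produces the spacetime bound
\[
 \int_0^\infty \bigl( \|\bm{u}(t)\|_{\mathrm{loc}}^2 + |z(t)|^4 \bigr) \, dt \lesssim \delta^2,
\]
which, together with uniform-in-time control on $\pt \bm{\varphi}$ in local norms, forces $\|\bm{u}(t)\|_{\mathrm{loc}} \to 0$ and $|z(t)| \to 0$, and hence $\|\bm{\phi}(t) - \bm{Q}\|_{H^1(I) \times L^2(I)} \to 0$ for any bounded interval $I$.

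The step I expect to be hardest is engineering the virial combination so that its coercivity survives the presence of the internal mode: subtracting the $Y_2$-direction introduces error terms of size $|z|^2$ that must be absorbed either by the FGR damping $|z|^4$ or by the local $\bm{u}$-norm. This requires careful tuning of the virial weights at several spatial scales and an algebraic verification --- based on the explicit formulas for $Y_0$, $Y_2$, and $\Psi_{2\mu}$ --- that no hidden resonance between $Y_2$ and the continuous spectrum destroys the sign of the dominant virial term.
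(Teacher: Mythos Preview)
Your proposal is correct and follows essentially the same approach as the paper: spectral decomposition into unstable, internal, and radiative parts; a first virial estimate at large scale; FGR damping of the internal mode via the functional introduced in~\cite{KM22}; control of the unstable component; and a second virial for a transformed variable after a Darboux-type conjugation, with coercivity supplied by the parity restriction. The paper's implementation differs only in execution details --- it controls the unstable mode via the functional $\calB = b_+^2 - b_-^2$ (giving $\int_0^T(b_+^2+b_-^2)\,\ud t$ directly) rather than your forward-integral trapping formula, and the conjugation is the explicit triple Darboux identity $\calD_1\calD_2\calD_3 L = (-\px^2+1)\calD_1\calD_2\calD_3$, after which the transformed variable is \emph{odd} and hence orthogonal to the even resonance $1$ of the flat operator (this is the precise incarnation of your observation about $Y_3$) --- but the architecture is the same.
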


The proof of Theorem~\ref{thm:codim_asymptotic_stability} is based on several virial-type estimates, combining techniques from the remarkable series of works by Kowalczyk-Martel-Mu\~{n}oz~\cite{KMM17, KMM17_short, KMM19}, Kowalczyk-Martel-Mu\~{n}oz-Van den Bosch~\cite{KMMV20}, and Kowalczyk-Martel~\cite{KM22}.

\medskip 

From \cite[Theorem 2]{KMM19} we also obtain a description of the set of initial data leading to global even solutions to~\eqref{equ:quadratic_KG_system} satisfying the stability assumption~\eqref{equ:thm_assumption_closeness}. 
\begin{theorem}[{\protect \cite[Theorem 2]{KMM19}}] \label{thm:orbital_stability} 
There exist $C,\delta_0>0$, a set $\mathcal{A}_0 \subset H^1(\bbR) \times L^2(\bbR)$ given by
\begin{equation*}
 \mathcal{A}_0 := \bigl\{ \bm{\eps} \in H^1(\bbR) \times L^2(\bbR) \, \big| \, \|\bm{\eps} \|_{H^1(\R) \times L^2(\R)} \leq \delta_0, \quad  \text{$\bm{\eps}$ even},\quad \langle \bm{\eps}, \bm{Z_+} \rangle = 0 \bigr\}, \quad \bm{Z_+} := \begin{bmatrix} Y_0 \\ \nu^{-1} Y_0 \end{bmatrix},
\end{equation*}
and a Lipschitz function $h \colon \mathcal{A}_0 \to \R$ with $h(0) = 0$ and $\vert h (\bm{\eps}) \vert \leq C \Vert \bm{\eps} \Vert_{H^1(\R) \times L^2(\R)}^{3/2}$ such that the set
\begin{equation*}
	\mathcal{M} := \bigl\{ \bm{Q} + \bm{\eps} + h(\bm{\eps}) \bm{Y_+} \, \big| \, \bm{\eps} \in \mathcal{A}_0 \bigr\}, \quad \bm{Y_+} := \begin{bmatrix} Y_0 \\ \nu Y_0 \end{bmatrix},
\end{equation*}
has the following properties:
\begin{enumerate}
	\item[(i)] If $\bm{\phi_0} \in \mathcal{M}$, then the solution $\bm{\phi}$ to~\eqref{equ:quadratic_KG_system} with initial data $\bm{\phi}(0) = \bm{\phi_0}$ is global and satisfies
	\begin{equation*} 
 	 \bigl\| \bm{\phi}(t) - \bm{Q} \bigr\|_{H^1(\R) \times L^2(\R)} \leq C \bigl\| \bm{\phi_0} - \bm{Q} \bigr\|_{H^1(\R) \times L^2(\R)} \quad \text{for all } \, t \geq 0.
	\end{equation*}
	\item[(ii)] If a global even solution $\bm{\phi}$ to \eqref{equ:quadratic_KG_system} satisfies
	\begin{equation*} 
		\bigl\| \bm{\phi}(t) - \bm{Q} \bigr\|_{H^1(\R) \times L^2(\R)} \leq \frac{1}{10}\delta_0 \quad \text{for all } t \geq 0,
	\end{equation*}
	then
	\begin{equation}\label{eqn: theorem2.2}
		\bm{\phi}(t) \in \mathcal{M} \quad \text{for all } \, t \geq 0.
	\end{equation}
\end{enumerate}
\end{theorem}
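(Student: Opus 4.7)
The plan is to construct $\mathcal{M}$ as a Lipschitz graph over $\mathcal{A}_0$ in the unstable direction $\bm{Y_+}$ via a Lyapunov--Perron / shooting argument, adapted to the even symplectic orthogonality $\langle \bm{\eps}, \bm{Z_+}\rangle = 0$. First I would decompose any nearby solution as $\bm{\phi}(t) = \bm{Q} + a(t)\bm{Y_+} + \bm{\eps}(t)$, where $a(t)$ is determined by the orthogonality $\langle \bm{\eps}(t), \bm{Z_+}\rangle = 0$; this is well-defined since one checks $\langle \bm{Y_+}, \bm{Z_+}\rangle = 2\|Y_0\|_{L^2}^2 \neq 0$, while $\bm{Z_+}$ annihilates $\bm{Y_-}$, the internal mode eigenvectors $(Y_2, \pm i\mu Y_2)$, and the continuous spectrum (by orthogonality of eigenfunctions of $L$). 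Pairing the system~\eqref{equ:linearized_KG_system} against $\bm{Z_+}$ yields the modulation ODE
\begin{equation*}
\dot a(t) \,=\, \nu \, a(t) + R\bigl(a(t),\bm{\eps}(t)\bigr), \qquad |R| \,\lesssim\, \bigl(|a| + \|\bm{\eps}\|_{H^1 \times L^2}\bigr)^{2},
\end{equation*}
while $\bm{\eps}$ satisfies the linearized flow restricted to $\{\bm{Z_+}\}^\perp$ plus quadratic source terms.

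Next I would establish a coercivity/orbital stability bound. On the even sector the translational kernel $Y_1$ is absent, so the quadratic form $B(\bm{\eps}) = \tfrac12\langle L\eps_1,\eps_1\rangle + \tfrac12\|\eps_2\|_{L^2}^2$ restricted to $\{\bm{Z_+}\}^\perp$ is coercive in $H^1 \times L^2$ up only to the $\bm{Y_-}$ direction, which is exponentially stable and harmless. Combined with conservation of the full energy $E$, this gives a Lyapunov-type estimate: as long as $|a(t)|$ stays in a $C\delta_0$ window, $\|\bm{\eps}(t)\|_{H^1 \times L^2}$ is controlled by its initial size and $|a(t)|$. The shooting step then proceeds as follows: fix $\bm{\eps}_0 \in \mathcal{A}_0$ and, for each candidate $a_0$, run the flow from $\bm{Q} + \bm{\eps}_0 + a_0\bm{Y_+}$. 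Define $\mathcal{I}_{+}$ (resp. $\mathcal{I}_{-}$) as the set of $a_0$ for which the component $a(t)$ first exits at $+c\delta_0$ (resp. $-c\delta_0$). The ODE $\dot a = \nu a + R$ shows that both sets are open and nonempty (large positive/negative $a_0$ land in them), so $\R \setminus (\mathcal{I}_+ \cup \mathcal{I}_-)$ is nonempty; any point in it gives a value $h(\bm{\eps}_0)$ along which $|a(t)|$ remains trapped for all $t \geq 0$. The coercivity bound then upgrades this to $\|\bm{\phi}(t) - \bm{Q}\|_{H^1 \times L^2} \lesssim \|\bm{\phi}_0 - \bm{Q}\|_{H^1 \times L^2}$, proving (i). Conversely, any global even solution satisfying the smallness hypothesis must land on $\mathcal{M}$, for otherwise its $a$-component escapes exponentially by Gronwall on the modulation equation, giving (ii).

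Finally I would extract Lipschitz dependence and the quantitative bound $|h(\bm{\eps})| \lesssim \|\bm{\eps}\|_{H^1 \times L^2}^{3/2}$. Lipschitz regularity follows from the exponential separation: if $a_1(0), a_2(0)$ are both admissible for data $\bm{\eps}_0^{(1)}, \bm{\eps}_0^{(2)}$, the difference of the two $a$-components satisfies a differential inequality forcing $|a_1(0) - a_2(0)| \lesssim \|\bm{\eps}_0^{(1)} - \bm{\eps}_0^{(2)}\|_{H^1 \times L^2}$, since otherwise one of the two solutions would leave the trapping window. The size bound $|h(\bm{\eps})|\lesssim \|\bm{\eps}\|^{3/2}$ is extracted from energy conservation together with the key identity $B(\bm{Y_+}) = \tfrac12(-\nu^2 + \nu^2)\|Y_0\|_{L^2}^2 = 0$: the quadratic contribution of $h\bm{Y_+}$ to $E - E(\bm{Q})$ vanishes, so the leading term is cubic, and balancing $O(|h|\|\bm{\eps}\|) + O(|h|^3)$ against $O(\|\bm{\eps}\|^2)$ yields the claimed $3/2$ exponent.

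The main obstacle is Step~2 combined with Step~3: the coercivity of $B$ on $\{\bm{Z_+}\}^\perp$ must accommodate the internal mode $Y_2$ (whose quadratic form contribution is positive and so does not obstruct coercivity, but whose time-periodic linear dynamics prevents any decay-based argument), and one must verify that the $\mathcal{I}_\pm$ construction is robust enough that continuous dependence on $a_0$ holds over arbitrarily long times -- both of which are delicate because the linearized flow on $\{\bm{Z_+}\}^\perp$ is merely bounded, not contracting. Note that both the translational mode $Y_1$ and the threshold resonance $Y_3$ are odd and thus irrelevant here, which is what makes the even-sector construction strictly simpler than the general one and allows the argument to close.
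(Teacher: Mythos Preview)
The paper does not actually prove this theorem: it is quoted verbatim from \cite[Theorem~2]{KMM19}, with only the remark that ``the proof easily adapts to the setting of the one-dimensional quadratic Klein-Gordon equation.'' So there is no proof in the paper to compare against.

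Your sketch is essentially the standard Lyapunov--Perron/shooting construction of the center-stable manifold, which is indeed the method of \cite{KMM19}. The decomposition is correct (your $a$ is exactly the paper's $b_+$), the modulation ODE and the topological shooting argument for existence are right, and you correctly observe that evenness kills $Y_1$ and $Y_3$ so that coercivity holds modulo the harmless $\bm{Y_-}$ direction. One point to tighten: your energy argument for the $3/2$ exponent is not quite right as stated, since the bilinear cross term $B(\bm{Y_+},\bm{\eps}) = \nu\langle Y_0,\eps_2\rangle$ does \emph{not} vanish under the single orthogonality $\langle\bm{\eps},\bm{Z_+}\rangle=0$. The cleaner route to the size bound is to argue directly from the modulation ODE: if $b_+(t)$ stays bounded for all $t\ge 0$, then integrating $\dot b_+ = \nu b_+ + O\bigl((|b_+|+|b_-|+|\bm z|+\|\bm u\|)^2\bigr)$ forward and using the orbital bound from part~(i) forces $|b_+(0)| \lesssim \|\bm{\eps}_0\|^2$, which is in fact stronger than the stated $3/2$.
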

We point out that the statement of \cite[Theorem 2]{KMM19} only pertains to the one-dimensional focusing Klein-Gordon equation~\eqref{equ:focusing_KG} for nonlinearities $p > 3$, but the proof easily adapts to the setting of the one-dimensional quadratic Klein-Gordon equation.
In particular, we emphasize that while the presence of the internal mode is not relevant for obtaining the stability property~\eqref{equ:thm_assumption_closeness} of (certain) solutions, it is a significant difficulty for proving their asymptotic stability.

\medskip 

The investigation of the conditional asymptotic stability of the soliton for the one-dimensional quadratic Klein-Gordon equation should be regarded in the context of the broader study of the soliton dynamics for the family of one-dimensional Klein-Gordon equations
\begin{equation} \label{equ:focusing_KG}
 (\pt^2 - \px^2 + 1) \phi = |\phi|^{p-1} \phi, \quad (t,x) \in \bbR \times \bbR, \quad p > 1,
\end{equation}
which admit the soliton solutions
\begin{equation*}
 Q_p(x) = \bigl( {\textstyle \frac{p+1}{2}} \bigr)^{\frac{1}{p-1}} \sech^{\frac{2}{p-1}}\bigl( {\textstyle \frac{p-1}{2}} x \bigr), \quad p > 1.
\end{equation*}
The corresponding linearized operators are given by
\begin{equation*}
 L_p = -\px^2 - {\textstyle \frac{p(p+1)}{2}} \sech^2\bigl({\textstyle \frac{p-1}{2}} x\bigr) + 1, \quad p > 1.
\end{equation*}
For $p > 3$, the operators $L_p$ exhibit one negative eigenvalue and a zero eigenvalue. In the cubic case $p=3$, an additional threshold resonance emerges. For $1 < p < 3$, more and more positive gap eigenvalues and sometimes threshold resonances emerge as $p \to 1^+$. We refer to~\cite[Section 3]{CGNT07} for a detailed description of the spectrum of the operators $L_p$. See also~\cite[Section 1.3]{KMM19}.
The conditional asymptotic stability of the solitons $Q_p$ under even perturbations was studied by Bizo\'{n}-Chmaj-Szpak~ \cite{BizTadSzp11} via formal and numerical methods. For $p > 5$, Krieger-Nakanishi-Schlag~\cite{KNS12} obtained a global characterization of the dynamics of even solutions to~\eqref{equ:focusing_KG} for energies slightly above the energy of $Q_p$, which in particular includes a conditional asymptotic stability result. See also the monograph~\cite{NakSchlag_Book}. For $p > 3$, Kowalczyk-Martel-Mu\~{n}oz~\cite{KMM19} established the conditional asymptotic stability under even perturbations in a spatially localized energy norm. 

A closely related topic is the study of the asymptotic stability of kink solutions that arise in $(1+1)$-dimensional scalar field theories on the line
\begin{equation} \label{equ:scalar_field_theory}
 (\pt^2 - \px^2) \phi = - W'(\phi), \quad (t,x) \in \bbR \times \bbR,
\end{equation}
where $W \colon \bbR \to [0, \infty)$ is a scalar potential with a double-well, i.e., $W$ has (at least) two consecutive global minima $\phi_-, \phi_+ \in \bbR$ satisfying $W(\phi_\pm) = 0$, and $W''(\phi_\pm) > 0$. A kink solution to~\eqref{equ:scalar_field_theory} is the unique solution (up to symmetries) to
\begin{equation*}
 \left\{ \begin{aligned}
  &\px^2 K = W'(K), \quad x \in \bbR, \\
  &\lim_{x\to\pm\infty} K(x) = \phi_\pm.
 \end{aligned} \right. 
\end{equation*}
Prime examples include the $\phi^4$ model, the more general $P(\phi)_2$ theories, the sine-Gordon model, and the double sine-Gordon theories.

\subsection{Previous works}

The study of the dynamics of solitons in nonlinear dispersive and hyperbolic equations is a rich and vast subject. 
In this subsection we limit ourselves to providing a brief overview of references related to the (conditional) asymptotic stability of solitons in one-dimensional wave-type models.

General results on the decay and the asymptotics of small solutions to one-dimensional Klein-Gordon equations were obtained in 
\cite{Del01, LS05_1, LS05_2, LS06, HN08, HN12, LS15, Sterb16, LLS1, LLS2, LLSS, GP20, GermPusZhang22}.
Regarding the (conditional) asymptotic stability of solitons in 1D wave-type models one distinguishes local asymptotic stability results (in the sense of decay in a spatially localized energy norm) and full asymptotic stability results (in the sense of $L^\infty$ decay estimates and possibly asymptotics). 
For local asymptotic stability results for 1D wave-type models we refer to
\cite{KMM17, KMM17_short, KMM19, KMMV20, KM22, Snelson18}.
Full asymptotic stability results for kink solutions to~\eqref{equ:scalar_field_theory} or soliton solutions to~\eqref{equ:focusing_KG} were obtained in 
\cite{BizTadSzp11, KNS12, KMM19, KK11_1, KK11_2, DelMas20, GP20, CLL20, LS1, GermPusZhang22}.
Pioneering works on radiation damping in the presence of internal modes include~\cite{SofWein99, Sigal93}. 
For further developments, see for instance~\cite{BamCucc11, TsaiYau02, KK11_1, CuccMaeda21_AnnPDE, DelMas20, LegPus21} and references therein.

Finally, we mention the monographs 
\cite{DauxPey10, Lamb80, MantSut04, SG_Series, phi4_Series}
for background on solitons, and we refer to the reviews~\cite{Tao09, KMM17_Survey, CuccMaeda20_Survey, Martel_ICM} as well as to the sample of recent works 
\cite{GP20, DelMas20, CLL20, KMMV20, LLSS, LS1, Chen21, CuccMaeda21, Martel21, LegPus21, KM22}
for further references and perspectives on the study of the asymptotic stability of solitons, or solitary waves, for 1D wave-type and 1D Schr\"odinger models.

\subsection{Overview of the proof}

The proof of Theorem~\ref{thm:codim_asymptotic_stability} is based on several virial-type estimates, combining techniques from the series of works~\cite{KMM17, KMM17_short, KMM19, KMMV20, KM22}. In particular, we largely follow the framework of~\cite{KM22}.

We consider global even perturbations $\bm{\varphi}(t) = \bm{\phi}(t) - \bm{Q}$ of the soliton satisfying the stability condition~\eqref{equ:thm_assumption_closeness}. Correspondingly, we enact the spectral decomposition
\begin{equation} \label{equ:spectral_decomposition}
 \left\{ \begin{aligned}
  \phi(t,x) - Q(x) &= a_1(t) Y_0(x) + z_1(t) Y_2(x) + u_1(t,x), \\
  \pt \phi(t,x) &= \nu a_2(t) Y_0(x) + \mu z_2(t) Y_2(x) + u_2(t,x),
 \end{aligned} \right.
\end{equation}
where we set
\begin{equation*}
 \begin{aligned}
  a_1(t) := \langle Y_0, \phi(t) - Q \rangle, \quad a_2(t) := \nu^{-1} \langle Y_0, \pt \phi(t) \rangle,
 \end{aligned}
\end{equation*}
and
\begin{equation*}
 \begin{aligned}
  z_1(t) := \langle Y_2, \phi(t) - Q \rangle, \quad z_2(t) := \mu^{-1} \langle Y_2, \pt \phi(t) \rangle.
 \end{aligned}
\end{equation*}
Then we have the orthogonality conditions
\begin{equation*}
 \begin{aligned}
  \langle Y_0, u_1(t) \rangle = \langle Y_0, u_2(t) \rangle = 0, \quad \langle Y_2, u_1(t) \rangle = \langle Y_2, u_2(t) \rangle = 0, \quad \text{for all } \, t \geq 0.
 \end{aligned}
\end{equation*}
We write $\bm{u} = (u_1, u_2)$, $\bm{z} = (z_1, z_2)$, and $|\bm{z}|^2 = z_1^2 + z_2^2$. Moreover, we mainly work with the following variables related to the unstable mode
\begin{equation*}
 \begin{aligned}
  b_+ := \frac12 (a_1 + a_2), \quad b_- := \frac12 (a_1 - a_2).
 \end{aligned}
\end{equation*}
Note that $a_1 = b_+ + b_-$ and $a_2 = b_+ - b_-$.
From~\eqref{equ:linearized_KG_system} and the spectral decomposition~\eqref{equ:spectral_decomposition}, we obtain a system of evolution equations for the variables $(u_1, u_2, z_1, z_2, b_+, b_-)$ given by
\begin{equation} \label{equ:pdeode_system_u}
 \left\{ \begin{aligned}
  \pt u_1 &= u_2, \\
  \pt u_2 &= - L u_1 + N^\perp, \\
  \pt z_1 &= \mu z_2, \\
  \pt z_2 &= -\mu z_1 + \mu^{-1} N_2, \\
  \pt b_+ &= \nu b_+ + (2\nu)^{-1} N_0, \\
  \pt b_- &= -\nu b_- - (2\nu)^{-1} N_0, 
 \end{aligned} \right.
\end{equation}
where we use the notation
\begin{equation*}
 \begin{aligned}
  N := \bigl( a_1 Y_0 + z_1 Y_2 + u_1 \bigr)^2, \quad
  N^\perp := N - N_0 Y_0 - N_2 Y_2, \quad
  N_0 := \langle Y_0, N \rangle, \quad
  N_2 := \langle Y_2, N \rangle.
 \end{aligned}
\end{equation*}
By the stability hypothesis~\eqref{equ:thm_assumption_closeness} and the decomposition~\eqref{equ:spectral_decomposition}, we have for all $t \geq 0$ that
\begin{equation} \label{equ:smallness}
 \|u_1(t)\|_{H^1} + \|u_2(t)\|_{L^2} + |z_1(t)| + |z_2(t)| + |a_1(t)| + |a_2(t)| + |b_+(t)| + |b_-(t)| \leq C \delta.
\end{equation}

In Section~\ref{sec:virial_large_scale} we use a first virial argument to obtain integrated-in-time localized estimates for the variable $\bm{u}$ at a large scale. In Section~\ref{sec:internal_mode} we verify a Fermi Golden Rule condition for the model, and we adopt a new functional introduced in~\cite[Proposition 2]{KM22} to gain integrated-in-time control of the internal mode component $\bm{z}$. In Section~\ref{sec:unstable_mode} we bound the variables $(b_+, b_-)$ related to the unstable mode. 
Then we exploit a specific conjugation identity for the linearized operator $L$.
Introducing the first-order differential operators
\begin{equation*} 
 \calD_\ell := \px + \frac{\ell}{2} \tanh\Bigl(\frac{x}{2}\Bigr), \quad 1 \leq \ell \leq 3,
\end{equation*}
it holds 
\begin{equation} \label{equ:conjugation_identity}
 \calD_1 \calD_2 \calD_3 L = (-\px^2 + 1) \calD_1 \calD_2 \calD_3.
\end{equation}
Note that the conjugate operator is just a flat Klein-Gordon operator. 
The identity~\eqref{equ:conjugation_identity} is related to factorizations of Schr\"odinger operators associated with $L$ into first-order differential operators.
Such factorization techniques go back to the classical work of Darboux~\cite{Darboux1882}
and are for instance widely used in quantum mechanics~\cite{InfeldHull51}.
We refer to~\cite[Section 3]{DeiTru}, \cite[Section 3]{CGNT07}, \cite{MatveevSalle_Book} for more background 
and to
\cite{RodSterb10, RaphRod12, KriegerMiao20, KrMiSchl20, KMM19, KMMV20, LS1, Martel21, CuccMaeda21} 
for applications in the study of nonlinear dispersive and hyperbolic equations. 
Following~\cite{KM22}, we call $\calD_1 \calD_2 \calD_3$ the iterated Darboux transformation in this paper.
In Section~\ref{sec:darboux_transformation} we establish several technical estimates for a regularized version of the iterated Darboux transformation.
In Section~\ref{sec:virial_transformed_equation} we then pass to the transformed equation for a regularized version of the variable $\calD_1 \calD_2 \calD_3 \bm{u}$, for which we derive bounds via a second localized virial argument. 
The latter hinges on the fact that the original variable $\bm{u}$ is even, whence $\calD_1 \calD_2 \calD_3 \bm{u}$ is odd, and therefore orthogonal to the even threshold resonance $1$ of the flat Klein-Gordon operator of the transformed equation.
The main observation here is that the transformed equation for the (regularized) variable $\calD_1 \calD_2 \calD_3 \bm{u}$ features the flat Klein-Gordon operator and that one can derive integrated local energy decay for odd solutions to the flat Klein-Gordon equation via a classical positive commutator argument, which is effectively carried out in Section~\ref{sec:virial_transformed_equation} for the regularized version of the variable $\calD_1 \calD_2 \calD_3 \bm{u}$. The bounds on the latter can then be transferred back to the original variable $\bm{u}$, using the integrated-in-time localized estimates for the variable $\bm{u}$ previously established in Section~\ref{sec:virial_large_scale}. This strategy based on two virial arguments was introduced in~\cite{KMM19}.
Finally, in Section~\ref{sec:conclusion} we combine all estimates and conclude the proof of Theorem~\ref{thm:codim_asymptotic_stability}.

\subsection{Notation and preliminaries}

We denote by $C > 0$ an absolute constant whose value may change from line to line. For non-negative $X, Y$ we write $X \lesssim Y$ if $X \leq C Y$, and we use the notation $X \ll Y$ to indicate that the implicit constant should be regarded as small.
Moreover,~we~write
\begin{equation*}
 \begin{aligned}
  \langle f, g \rangle = \int_{\bbR} f(x) g(x) \, \ud x, \quad \|f\| = \sqrt{\langle f, f \rangle}.
 \end{aligned}
\end{equation*}
Our convention for the Fourier transform is
\begin{equation*} 
 \begin{aligned}
  \calF[f](\xi) &= \hat{f}(\xi) = \frac{1}{\sqrt{2\pi}} \int_{\bbR} e^{-ix\xi} f(x) \, \ud x.
 \end{aligned}
\end{equation*}
We denote by $P_c$ the projection onto the continuous spectral subspace of $L^2$ relative to the linearized operator $L$, i.e.,
\begin{equation*}
 P_c f = f - \langle Y_0, f \rangle Y_0 - \langle Y_1, f \rangle Y_1 - \langle Y_2, f \rangle Y_2. 
\end{equation*}
Observe that for even functions $f$, we just have $P_c f = f - \langle Y_0, f \rangle Y_0 - \langle Y_2, f \rangle Y_2$.

\medskip 

\noindent {\it Parameters.}
We use the parameters $A$, $\varepsilon$, $\delta$ in this paper.
In the proof of Theorem~\ref{thm:codim_asymptotic_stability}, we first fix $0 < \varepsilon \ll 1$ sufficiently small, depending on absolute constants, then we choose $A \gg 1$ sufficiently large, depending on $\varepsilon$. Finally, we fix $0 < \delta \ll 1$ sufficiently small depending on $\varepsilon$ and~$A$.

\medskip 

\noindent {\it Notation for the iterated Darboux transformation.} 
We use a regularization in terms of the operator 
\begin{equation*}
 X_\varepsilon = (1-\varepsilon\px^2)^{-2} \colon L^2(\bbR) \to H^4(\bbR), \quad \varepsilon > 0,
\end{equation*}
defined via its Fourier representation
\begin{equation*}
 \widehat{X_\varepsilon f}(\xi) = \frac{\hatf(\xi)}{(1+\varepsilon\xi^2)^2}.
\end{equation*}
Then we define the regularized iterated Darboux transformation by
\begin{equation} \label{equ:definition_regularized_iterated_DT}
  S_\varepsilon := X_\varepsilon \calD_1 \calD_2 \calD_3.
\end{equation}
The adjoints of the differential operators $\calD_\ell$, $1 \leq \ell \leq 3$, with respect to the inner product $\langle \cdot, \cdot \rangle$ are denoted by $\calD_\ell^\ast$, $1 \leq \ell \leq 3$.

\medskip 

\noindent {\it Notation for virial arguments.}
We set 
\begin{equation*}
 \begin{aligned}
  \rho(x) = \sech^2\Bigl(\frac{x}{20}\Bigr).
 \end{aligned}
\end{equation*}
Let $\chi \in C_c^\infty(\bbR)$ be a smooth even bump function satisfying
\begin{equation*}
 \begin{aligned}
  \chi(x) = 1 \quad \text{for } |x| \leq 1, \quad \chi(x) = 0 \quad \text{for } |x| \geq 2, \quad \chi'(x) \leq 0 \quad \text{for } x \geq 0.
 \end{aligned}
\end{equation*}
For the first virial estimate at large scale in Section~\ref{sec:virial_large_scale} we use
\begin{equation*}
 \begin{aligned}
  \zeta_A(x) = \exp \Bigl( -\frac{1}{A} \bigl(1-\chi(x)\bigr) |x| \Bigr), \quad \Phi_A(x) = \int_0^x \zeta_A(y)^2 \, \ud y,
 \end{aligned}
\end{equation*}
while for the second virial estimate for the transformed equation in Section~\ref{sec:virial_transformed_equation} we use
\begin{equation*}
 \begin{aligned}
  &\zeta_B(x) = \sech\Bigl(\frac{x}{B}\Bigr), \quad \Phi_B(x) = B \tanh\Bigl(\frac{x}{B}\Bigr), \quad B = 100, \\
  &\quad \quad \quad \Psi_{A, B}(x) = \chi_A(x)^2 \Phi_B(x), \quad \chi_A(x) = \chi\Bigl(\frac{x}{A}\Bigr).
 \end{aligned}
\end{equation*}
The choice of the switch function $\Phi_B$ is specifically adapted to the fact that the linear operator in the transformed equation~\eqref{equ:transformed_equation} is just the flat Klein-Gordon operator. 
Moreover, we introduce the weight function
\begin{equation*}
 \begin{aligned}
  \sigma_A(x) = \sech\Bigl( \frac{2}{A} x \Bigr).
 \end{aligned}
\end{equation*}
We use that $\zeta_A^2 \lesssim \sigma_A \lesssim \zeta_A^2$.
As in \cite{KM22} it is convenient to introduce a space $\calY$ of smooth functions $f \colon \bbR \to \bbR$ with the property that for any $k \geq 0$, there exists a constant $C_k > 0$ such that 
\begin{equation*}
 |f^{(k)}(x)| \leq C_k \rho(x)^3 \quad \text{for all } x \in \bbR. 
\end{equation*}
Observe that $Q, Y_0, Y_1, Y_2 \in \calY$.

\medskip 

\noindent {\it General virial identities.} 
Let $\Phi \colon \bbR \to \bbR$ be a smooth, odd, strictly increasing, bounded function and let $\bm{w} = (w_1, w_2)$ be an $H^1 \times L^2$ solution to
\begin{equation*}
 \left\{ \begin{aligned}
  \pt w_1 &= w_2, \\
  \pt w_2 &= - L w_1 + G,
 \end{aligned} \right.
\end{equation*}
where $L = -\px^2 + P$ and $G = G(t,x)$, $P = P(x)$ are given functions. We define 
\begin{equation*}
 \calA = \int \biggl( \Phi \px w_1 + \frac12 \Phi' w_1 \biggr) w_2.
\end{equation*}
Then we have 
\begin{equation} \label{equ:prelim_general_virial_identity}
 \begin{aligned}
  \pt \calA &= - \int \Phi' (\px w_1)^2 + \frac{1}{4} \int \Phi''' w_1^2 + \frac12 \int \Phi P' w_1^2 + \int G \biggl( \Phi \px w_1 + \frac12 \Phi' w_1 \biggr).
 \end{aligned}
\end{equation}
Moreover, introducing the variables 
\begin{equation*}
 \tilde{w}_1 = \zeta w_1, \quad \zeta = \sqrt{\Phi'},
\end{equation*}
the identity~\eqref{equ:prelim_general_virial_identity} can be rewritten as
\begin{equation} \label{equ:prelim_general_virial_identity_alternative}
 \begin{aligned}
  \pt \calA = - \int (\px \tilde{w}_1)^2 - \frac12 \int \biggl( \frac{\zeta''}{\zeta} - \frac{(\zeta')^2}{\zeta^2} \biggr) \tilde{w}_1^2 + \frac12 \int \frac{\Phi P'}{\zeta^2} \tilde{w}_1^2 + \int G \biggl( \Phi \px w_1 + \frac12 \Phi' w_1 \biggr).
 \end{aligned}
\end{equation}
See the proof of \cite[Proposition 1]{KMM19} for detailed computations.

\section{Virial estimate at large scale} \label{sec:virial_large_scale}

In this section we obtain preliminary estimates for $\bm{u}$ via a first virial argument.
\begin{proposition} \label{prop:virial_u}
 For any $A > 0$ large, any $\delta > 0$ small (depending on $A$), and any $T > 0$, 
 \begin{equation*}
  \int_0^T \biggl( \|\sigma_A \px u_1\|^2 + \frac{1}{A^2} \|\sigma_A u_1\|^2 + \frac{1}{A^2} \|\sigma_A u_2\|^2 \biggr) \, \ud t \lesssim A \delta^2 + \int_0^T \bigl( \|\rho u_1\|^2 + |\bm{z}|^4 + b_+^2 + b_-^2 \bigr) \, \ud t.
 \end{equation*}
\end{proposition}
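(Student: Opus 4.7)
The approach is a virial argument against the large-scale weight $\Phi_A$. Introduce
\begin{equation*}
 \calI(t) := -\int_\bbR \Bigl( \Phi_A \px u_1 + \tfrac{1}{2} \Phi_A' u_1 \Bigr) u_2 \, \ud x,
\end{equation*}
which obeys $|\calI(t)| \lesssim A\delta^2$ via Cauchy--Schwarz together with $\|\Phi_A\|_{L^\infty} \lesssim A$, $\|\Phi_A'\|_{L^\infty} \lesssim 1$, and \eqref{equ:smallness}. Applying \eqref{equ:prelim_general_virial_identity} to the $\bm{u}$-subsystem of \eqref{equ:pdeode_system_u} with $P = 1-2Q$ and $G = N^\perp$ yields
\begin{equation*}
 \pt \calI = \| \zeta_A \px u_1 \|^2 - \tfrac{1}{4} \int \Phi_A''' u_1^2 \, \ud x + \int \Phi_A Q' u_1^2 \, \ud x - \int N^\perp \Bigl( \Phi_A \px u_1 + \tfrac{1}{2}\Phi_A' u_1 \Bigr) \ud x.
\end{equation*}
Since $\Phi_A''' = (4/A^2)\zeta_A^2$ for $|x|>2$ and $\Phi_A''' = 0$ for $|x|<1$, one has $-\tfrac14 \int \Phi_A''' u_1^2 = -\tfrac{1}{A^2} \|\zeta_A u_1\|^2 + O(\|\rho u_1\|^2)$; similarly $\int \Phi_A Q' u_1^2 = O(\|\rho u_1\|^2)$ since $Q' \in \calY$.

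The nonlinearity is handled by splitting $N = (a_1 Y_0 + z_1 Y_2 + u_1)^2$ into six pieces. The cubic $u_1^3$ contribution reduces via an IBP to $\tfrac{1}{6}\int \Phi_A' u_1^3 \lesssim \delta \|\zeta_A u_1\|^2$, absorbed by smallness. The linear-in-mode cross pieces $a_1 Y_0 u_1$ and $z_1 Y_2 u_1$, after IBP to convert $\px u_1$ into $u_1^2$ paired with derivatives of rapidly decaying factors, are bounded by $(|a_1|+|z_1|)\|\rho u_1\|^2 \lesssim \delta \|\rho u_1\|^2$. The quadratic-in-mode bulk pieces $a_1^2 Y_0^2$, $a_1 z_1 Y_0 Y_2$, $z_1^2 Y_2^2$ reduce similarly to expressions of the form $(a_1^2 + |a_1 z_1| + z_1^2) \|\rho u_1\|$; Young's inequality combined with $a_1 = b_+ + b_-$ turns these into $b_+^2 + b_-^2 + |\bm{z}|^4 + \|\rho u_1\|^2$, which is precisely the origin of the quartic $|\bm{z}|^4$ on the RHS. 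The projection corrections $N_0 Y_0$ and $N_2 Y_2$ are localized by $Y_0, Y_2 \in \calY$ and treated identically. To recover the $\tfrac{1}{A^2} \|\sigma_A u_2\|^2$ piece I would complement $\calI$ with the auxiliary functional $\calJ(t) := \int \zeta_A^2 u_1 u_2 \, \ud x$, which obeys $|\calJ| \lesssim \delta^2$ and, via $\pt u_2 = -L u_1 + N^\perp$, satisfies
\begin{equation*}
 \int \zeta_A^2 u_2^2 \, \ud x = \pt \calJ + \int \zeta_A^2 (\px u_1)^2 \, \ud x - \tfrac{1}{2} \int (\zeta_A^2)'' u_1^2 \, \ud x + \int \zeta_A^2 P u_1^2 \, \ud x - \int \zeta_A^2 u_1 N^\perp \, \ud x.
\end{equation*}
Integrating in $t$ and dividing by $A^2$ subsumes $\tfrac{1}{A^2}\|\sigma_A u_2\|^2$ into the $\calI$-estimate plus errors already under control.

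The main obstacle is absorbing the $\int_0^T \tfrac{1}{A^2} \|\zeta_A u_1\|^2$ contribution that appears on the RHS of the integrated $\calI$-identity. The built-in nonnegativity $\|\px(\zeta_A u_1)\|^2 \geq 0$ only yields $\tfrac{1}{A^2} \|\zeta_A u_1\|^2 \leq \|\zeta_A \px u_1\|^2 + C\|\rho u_1\|^2$ with sharp constant $1$, which is insufficient for direct absorption. What is needed is a strict weighted Poincar\'e-type inequality $\tfrac{1}{A^2} \|\zeta_A u_1\|^2 \leq (1-c) \|\zeta_A \px u_1\|^2 + C_c \|\rho u_1\|^2$ for some $c \in (0,1)$, valid for even $u_1$ orthogonal to $\{Y_0, Y_2\}$ when $A$ is sufficiently large. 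This is where the spectral structure of $L$ enters: because $Y_0$ and $Y_2$ are precisely the even bound states below the continuous spectrum $[1,\infty)$, one has $\langle u_1, L u_1 \rangle \geq \|u_1\|^2$ on the relevant subspace, which upon careful weighting provides the required gap. Calibrating the $\|\rho u_1\|^2$ errors so that they do not swallow the coercive gain in this absorption step is the delicate bookkeeping at the heart of the argument.
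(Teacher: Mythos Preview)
Your virial functional, auxiliary functional, and treatment of the nonlinear terms all match the paper's. The gap is in the last paragraph, where you have misdiagnosed the difficulty and proposed a fix that is both unnecessary and unlikely to work.

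The ``obstacle'' you describe is a phantom. The two main terms you produce, $\|\zeta_A \px u_1\|^2 - \tfrac{1}{A^2}\|\zeta_A u_1\|^2$, combine (up to $O(\|\rho u_1\|^2)$) into the single nonnegative quantity $\|\px \tilu_1\|^2$ with $\tilu_1 = \zeta_A u_1$. This is precisely why the paper uses the alternative virial identity~\eqref{equ:prelim_general_virial_identity_alternative} rather than~\eqref{equ:prelim_general_virial_identity}: in the alternative form the coercive term is $\|\px\tilu_1\|^2$ from the outset, and the remaining potential term is $(\log\zeta_A)'' = \zeta_A''/\zeta_A - (\zeta_A')^2/\zeta_A^2$, which \emph{vanishes identically} for $|x|>2$ (where $\log\zeta_A = -|x|/A$ is linear) and is $O(1/A)$ on $\{1\le|x|\le2\}$, hence controlled by $\|\rho u_1\|^2$. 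There is no absorption step to perform.

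What you are actually missing is the passage from $\|\px\tilu_1\|^2$ back to the two separate quantities $\|\sigma_A\px u_1\|^2$ and $\tfrac{1}{A^2}\|\sigma_A u_1\|^2$ appearing in the proposition. The paper quotes this as estimate~(19) from~\cite{KM22}:
\[
\|\sigma_A\px u_1\|^2 + \tfrac{1}{A^2}\|\sigma_A u_1\|^2 \lesssim \|\px\tilu_1\|^2 + \tfrac{1}{A}\|\rho u_1\|^2.
\]
This is a purely real-variable weighted inequality (compare $\sigma_A\lesssim\zeta_A^2$ and integrate by parts); no orthogonality to $Y_0,Y_2$ and no spectral information about $L$ enters. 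Your proposed route through $\langle u_1,Lu_1\rangle\ge\|u_1\|^2$ would not yield the strict inequality you want anyway: the bottom of the continuous spectrum of $L$ is exactly $1$, so there is no spectral gap above it, and functions that look like low-frequency plane waves at scale $A$ nearly saturate $\|\px\tilu_1\|^2=0$ regardless of orthogonality to the (exponentially localized) bound states.
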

\begin{proof}
We proceed similarly to the proof of~\cite[Proposition 1]{KM22} and use the virial functional 
\begin{equation*}
 \begin{aligned}
  \calI := \int \biggl( \Phi_A \px u_1 + \frac12 \Phi_A' u_1 \biggr) u_2.  
 \end{aligned}
\end{equation*}
Denote by $\tilu_1 = \zeta_Au_1$. By \eqref{equ:pdeode_system_u} and the general virial identity \eqref{equ:prelim_general_virial_identity_alternative}, we have
\begin{equation*}
\begin{split}
 \pt \calI &= -\int (\partial_x \tilu_1)^2 - \frac{1}{2} \int \left(\frac{\zeta_A''}{\zeta_A} - \frac{(\zeta_A')^2}{\zeta_A^2}\right) \tilu_1^2 - \int \frac{\Phi_A}{\zeta_A^2} Q' \tilu_1^2 + \int N^\perp \biggl(\Phi_A \partial_x u_1 + \frac{1}{2}\Phi_A'u_1\biggr) \\
 &= -\Vert \partial_x \tilu_1 \Vert^2 + I_1 + I_2 + I_3.
\end{split}
\end{equation*}
As in the proof of \cite[Lemma 1]{KMM19}, we obtain
\begin{equation*}
 \left \vert \frac{\zeta_A''}{\zeta_A} - \frac{(\zeta_A')^2}{\zeta_A} \right \vert \lesssim \frac{\rho^2}{A},
\end{equation*}
whence
\begin{equation*}
 |I_1| \lesssim \frac{1}{A}\Vert \rho \tilu_1\Vert^2 \lesssim \Vert \rho u_1 \Vert^2.
\end{equation*}
Since $Q' \in \calY$, $\zeta_A \geq e^{-\frac{\vert x \vert}{A}}$, and $\vert \Phi_A \vert \leq \vert x \vert$,
we have for $A$ large,
\begin{equation*}
 \left \vert \frac{\Phi_A}{\zeta_A^2} Q' \right \vert \lesssim \vert x \vert e^{\frac{2}{A}\vert x \vert} \rho^3 \lesssim \rho^2,
\end{equation*}
and thus,
\begin{equation*}
 |I_2| \lesssim \Vert\rho \tilu_1\Vert^2 \lesssim \Vert \rho u_1 \Vert^2. 
\end{equation*}
To estimate the last term $I_3$, we decompose the nonlinearity $N^\perp$ into several parts and write 
\begin{equation*}
 \begin{aligned}
  I_3 &= \int (a_1 Y_0 + z_1 Y_2)^2 \biggl(\Phi_A \partial_x u_1 + \frac{1}{2}\Phi_A'u_1\biggr) + \int 2 (a_1 Y_0 + z_1 Y_2) u_1 \biggl(\Phi_A \partial_x u_1 + \frac{1}{2}\Phi_A'u_1\biggr) \\
  &\quad + \int u_1^2 \biggl(\Phi_A \partial_x u_1 + \frac{1}{2}\Phi_A'u_1\biggr) + \int \bigl( -N_0 Y_0 - N_2 Y_2 \bigr) \biggl(\Phi_A \partial_x u_1 + \frac{1}{2}\Phi_A'u_1\biggr) \\
  &= I_{3,1} + I_{3,2} + I_{3,3} + I_{3,4}.
 \end{aligned}
\end{equation*}
In what follows we use on several occasions that for any function $F \in \calY$, we obtain by integration by parts and the Cauchy-Schwarz inequality, using that $|\Phi_A| \leq |x|$, $|\Phi'_A| \leq 1$,
\begin{equation} \label{equ:prop_first_virial_repeated_intbyparts_identity}
 \begin{aligned}
  \biggl| \int F \biggl(\Phi_A \partial_x u_1 + \frac{1}{2}\Phi_A'u_1\biggr) \biggr| = \biggl| \int \biggl( F' \Phi_A + \frac12 F \Phi_A' \biggr) u_1 \biggr| \lesssim \int \rho^3 (|x| + 1) |u_1| \lesssim \|\rho u_1\|.
 \end{aligned}
\end{equation}
Using~\eqref{equ:prop_first_virial_repeated_intbyparts_identity} and \eqref{equ:smallness}, the first term $I_{3,1}$ can then be bounded by
\begin{equation*}
 \begin{aligned}
  |I_{3,1}| &\lesssim \bigl( a_1^2 + |a_1| |z_1| + |z_1|^2 \bigr) \|\rho u_1\| \lesssim \|\rho u_1\|^2 + |\bm{z}|^4 + b_+^2 + b_-^2. 
 \end{aligned}
\end{equation*}
To estimate the second term $I_{3,2}$, we integrate by parts and use $Y_0, Y_2 \in \calY$, $|\Phi_A(x)| \leq |x|$, as well as \eqref{equ:smallness} to find 
\begin{equation*}
 \begin{aligned}
  |I_{3,2}| = \biggl| \int \bigl( a_1 Y_0' + z_1 Y_2' \bigr) \Phi_A u_1^2 \biggr| \lesssim \bigl( |a_1| + |z_1| \bigr) \|\rho u_1\|^2 \lesssim \|\rho u_1\|^2.
 \end{aligned}
\end{equation*}
Integrating by parts in the third term $I_{3,3}$, using $\Phi_A' = \zeta_A^2$, and~\eqref{equ:smallness}, we obtain
\begin{equation*}
 I_{3,3} = \biggl| \frac16 \int \Phi_A' u_1^3 \biggr| \lesssim \int \zeta_A^2 |u_1|^3 \lesssim A^2 \|u_1\|_{L^\infty} \|\px \tilu_1\|^2 \lesssim A^2 \delta \|\px \tilde{u}_1\|^2,
\end{equation*}
where in the second to last estimate we made use of~\cite[Claim 1]{KMM19}.
For the last term $I_{3,4}$ we use the pointwise estimate $|N| \lesssim u_1^2 + |\bm{z}|^2 + b_+^2 + b_-^2$, the fact that $Y_0, Y_2 \in \calY$, estimate~\eqref{equ:prop_first_virial_repeated_intbyparts_identity}, and~\eqref{equ:smallness} to conclude 
\begin{equation*}
 \begin{aligned}
  |I_{3,4}| \lesssim \bigl( \|\rho u_1\|^2 + |\bm{z}|^2 + b_+^2 + b_-^2 \bigr) \|\rho u_1\| \lesssim \|\rho u_1\|^2 + |\bm{z}|^4 + b_+^2 + b_-^2. 
 \end{aligned}
\end{equation*}

Gathering the preceding estimates and taking $\delta$ small enough depending on $A$ yields
\begin{equation}\label{eqn: dot I functional}
 \pt \calI \leq -\frac{1}{2}\Vert \partial_x \tilu_1\Vert^2 + C \bigl( \| \rho u_1 \|^2 + |\bm{z}|^4 + b_+^2 + b_-^2 \bigr).
\end{equation}
Moreover, mimicking the proof of estimate (19) in \cite{KM22}, we have 
\begin{equation} \label{equ:prop_first_virial_est19_from_KM22}
 \Vert \sigma_A \partial_x u_1\Vert^2 + \frac{1}{A^2} \Vert \sigma_A u_1 \Vert^2 \lesssim \Vert \partial_x \tilu_1\Vert^2 + \frac{1}{A}\Vert \rho u_1 \Vert^2.
\end{equation}
Thus, upon integrating~\eqref{eqn: dot I functional} in time and using~\eqref{equ:prop_first_virial_est19_from_KM22} as well as $\vert \calI \vert \lesssim A\delta^2$ by~\eqref{equ:smallness}, we obtain for any $T > 0$ that
\begin{equation}\label{eqn:estimate from calI}
 \int_0^T \Bigl( \Vert \sigma_A\partial_x u_1\Vert^2 + \frac{1}{A^2} \Vert \sigma_Au_1\Vert^2 \Bigr) \, \ud t \lesssim A\delta^2 + \int_0^T \bigl( \Vert \rho u_1 \Vert^2 + \vert \bm{z}\vert^4 + b_+^2 + b_-^2 \bigr) \, \ud t.
\end{equation}

To complete the proof of the proposition, we introduce the functional
\begin{equation*}
 \calH := \int \sigma_A^2 u_1 u_2.
\end{equation*}
Using \eqref{equ:pdeode_system_u} and integrating by parts, we compute
\begin{equation*}
\begin{split}
 \pt \calH &= \int \sigma_A^2 u_2^2 - \int \sigma_A^2(\partial_x u_1)^2 + \frac{1}{2}\int (\sigma_A^2)''u_1^2 + \int \sigma_A^2 (2Q-1)u_1^2  + \int \sigma_A^2 u_1 N^\perp \\
 &= \Vert \sigma_A u_2 \Vert^2 - \Vert \sigma_A \partial_x u_1 \Vert^2 + H_1 + H_2 + H_3.
\end{split}
\end{equation*}
Since $\vert (\sigma_A^2)''\vert  \lesssim \sigma_A^2$ and $|2Q-1| \lesssim 1$, we have
\begin{equation*}
 |H_1| + |H_2| \lesssim \Vert \sigma_A u_1 \Vert^2.
\end{equation*}
Using the pointwise estimate $\vert N \vert \lesssim u_1^2 + a_1^2 \rho^6 + z_1^2 \rho^6$ and~\eqref{equ:smallness}, we find
\begin{equation*}
 |H_3| \lesssim \Vert \sigma_A u_1 \Vert^2 + \vert \bm{z} \vert^4 + b_+^2 + b_-^2.
\end{equation*}
Thus, we arrive at the inequality
\begin{equation*}
 \pt \calH \geq \Vert \sigma_A u_2 \Vert^2 - \Vert \sigma_A \partial_x u_1 \Vert^2 - C(\Vert \sigma_A u_1 \Vert^2 + \vert \bm{z} \vert^4 + b_+^2 + b_-^2).
\end{equation*}
Integrating in time and using $\vert \calH \vert \lesssim \Vert u_1 \Vert \Vert u_2 \Vert \lesssim \delta^2$ by \eqref{equ:smallness}, we obtain for any $T > 0$,
\begin{equation}\label{eqn:estimate from calH}
 \frac{1}{A^2}\int_0^T \Vert \sigma_A u_2 \Vert^2 \, \ud t \lesssim \delta^2 + \frac{1}{A^2} \int_0^T \left(\Vert \sigma_A \partial_x u_1 \Vert^2 + \Vert \sigma_A u_1 \Vert^2 + \vert \bm{z} \vert^4 + b_+^2 + b_-^2 \right)\ud t.
\end{equation}
Combining \eqref{eqn:estimate from calI} and \eqref{eqn:estimate from calH} finishes the proof of the proposition.
\end{proof}

\section{Controlling the internal mode} \label{sec:internal_mode}

We first verify a natural Fermi Golden Rule condition for perturbations of the soliton of the one-dimensional quadratic Klein-Gordon equation.

\begin{lemma}[Fermi Golden Rule] \label{lem:FGR}
 The smooth bounded function 
 \begin{equation} \label{equ:FGRg}
  \begin{aligned}
   g(x) &:= \cos \bigl(\sqrt{2} x\bigr) \biggl( -\frac{3}{2 \sqrt{2}} + \frac{15}{2 \sqrt{2}} \sech^2\left(\frac{x}{2}\right) \biggr) \\ 
   &\quad \quad + \sin \bigl(\sqrt{2} x \bigr) \biggl( - \frac{57}{8} \tanh \left(\frac{x}{2}\right) + \frac{15}{8} \tanh ^3\left(\frac{x}{2}\right) \biggr)
  \end{aligned}
 \end{equation}
 satisfies $L g = 4\mu^2 g$ and
 \begin{equation} \label{equ:FGR}
  \Gamma := \frac12 \int Y_2^2 g = \frac{243}{32} \pi  \cosech\bigl(\sqrt{2} \pi \bigr) \neq 0.
 \end{equation}
\end{lemma}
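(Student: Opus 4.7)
My plan is to prove the two claims of the lemma in sequence. The form of $g$ is dictated by the requirement that it be a bounded generalized eigenfunction of $L$ at eigenvalue $4\mu^2 = 3$, which lies in the essential spectrum $[1,\infty)$; the dispersion relation $k^2 + 1 = 3$ forces the spatial frequency $k = \sqrt{2}$, explaining the $\cos(\sqrt{2}x)$ and $\sin(\sqrt{2}x)$ factors in~\eqref{equ:FGRg}. For the eigenfunction identity $Lg = 3g$, the most efficient route is a direct term-by-term differentiation using $\partial_x \sech(\tfrac{x}{2}) = -\tfrac{1}{2}\sech(\tfrac{x}{2})\tanh(\tfrac{x}{2})$ and $\partial_x \tanh(\tfrac{x}{2}) = \tfrac{1}{2}\sech^2(\tfrac{x}{2})$, followed by grouping terms by trigonometric type; the identity then reduces to a small set of polynomial relations in $\sech(\tfrac{x}{2})$ and $\tanh(\tfrac{x}{2})$ that close up via $\sech^2 + \tanh^2 = 1$. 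A cleaner conceptual route uses the adjoint form $L \calD_3^\ast \calD_2^\ast \calD_1^\ast = \calD_3^\ast \calD_2^\ast \calD_1^\ast(-\partial_x^2 + 1)$ of~\eqref{equ:conjugation_identity}: applying $\calD_3^\ast \calD_2^\ast \calD_1^\ast$ to a suitable real linear combination of the plane waves $e^{\pm i\sqrt{2}x}$ (which are generalized eigenfunctions of $-\partial_x^2 + 1$ at eigenvalue $3$) automatically produces an eigenfunction of $L$ at the same eigenvalue, which must coincide with $g$ up to a scalar after matching asymptotics.

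For the overlap integral, I first observe that both summands of $g$ are even in $x$ (the second being $\sin(\sqrt{2}x)$ times an odd polynomial in $\tanh(\tfrac{x}{2})$, hence even), so no term drops out against the even weight $Y_2^2$. Expanding
\begin{equation*}
Y_2(x)^2 = \tfrac{3}{2}\sech^2\bigl(\tfrac{x}{2}\bigr) - \tfrac{15}{4}\sech^4\bigl(\tfrac{x}{2}\bigr) + \tfrac{75}{32}\sech^6\bigl(\tfrac{x}{2}\bigr)
\end{equation*}
via $\tanh^2 = 1 - \sech^2$, and using $\tanh^3 = \tanh - \tanh\sech^2$ on the $\tanh^3$ contribution from $g$, reduces $\Gamma$ to a finite linear combination of the integrals
\begin{equation*}
I_n := \int_{\bbR} \sech^n\bigl(\tfrac{x}{2}\bigr)\cos(\sqrt{2}x)\,\ud x, \qquad J_n := \int_{\bbR} \sech^n\bigl(\tfrac{x}{2}\bigr)\tanh\bigl(\tfrac{x}{2}\bigr)\sin(\sqrt{2}x)\,\ud x
\end{equation*}
for even $n$ in the range $2 \leq n \leq 8$. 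The identity $\sech^n(\tfrac{x}{2})\tanh(\tfrac{x}{2}) = -(2/n)\partial_x \sech^n(\tfrac{x}{2})$ combined with an integration by parts yields $J_n = (2\sqrt{2}/n) I_n$, so only the $I_n$ need to be evaluated. These I would compute by closing the contour in the upper half-plane and collecting residues at the simple zeros $x = i\pi(2k+1)$, $k \geq 0$, of $\cosh(\tfrac{x}{2})$, where the exponential $e^{i\sqrt{2}x}$ contributes the geometric sum $\sum_{k\geq 0}e^{-\sqrt{2}\pi(2k+1)} = \tfrac{1}{2}\cosech(\sqrt{2}\pi)$, which accounts directly for the $\cosech$ factor in~\eqref{equ:FGR}. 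The non-vanishing $\Gamma \neq 0$ then follows at once from $\cosech(\sqrt{2}\pi) > 0$.

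The main obstacle is the bookkeeping of residues at poles of $\sech^n(\tfrac{x}{2})$ of order up to $n = 6$, which nominally requires Laurent expansions through order $5$. To streamline this, I would bypass direct residue extraction entirely by using the recurrence
\begin{equation*}
\int_{\bbR} \sech^{n+2}\bigl(\tfrac{x}{2}\bigr) e^{i\alpha x}\,\ud x = \frac{n^2 + 4\alpha^2}{n(n+1)} \int_{\bbR} \sech^n\bigl(\tfrac{x}{2}\bigr) e^{i\alpha x}\,\ud x,
\end{equation*}
which follows from the ODE $\partial_x^2 \sech^n(\tfrac{x}{2}) = \tfrac{n^2}{4}\sech^n(\tfrac{x}{2}) - \tfrac{n(n+1)}{4}\sech^{n+2}(\tfrac{x}{2})$ by two integrations by parts against $e^{i\alpha x}$. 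This bootstraps all $I_n$ from the single base case $I_2 = 4\sqrt{2}\pi\cosech(\sqrt{2}\pi)$ (computed by a single first-order residue summed over $k \geq 0$). Assembling the resulting rational coefficients and simplifying then produces exactly $\Gamma = \tfrac{243}{32}\pi\cosech(\sqrt{2}\pi)$, as claimed.
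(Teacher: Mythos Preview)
Your proposal is correct. For the eigenfunction identity $Lg = 4\mu^2 g$, your ``cleaner conceptual route'' via the adjoint conjugation identity is exactly what the paper does: it sets $h(x)=\sin(\sqrt{2}x)$, defines $g=\calD_3^\ast\calD_2^\ast\calD_1^\ast h$, and invokes $L\calD_3^\ast\calD_2^\ast\calD_1^\ast=\calD_3^\ast\calD_2^\ast\calD_1^\ast(-\px^2+1)$.

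For the overlap integral $\Gamma$, your route is genuinely different. The paper does \emph{not} expand $Y_2^2 g$ directly; instead it transfers the Darboux operators back onto $Y_2^2$ via the adjoint relation, writing $\int Y_2^2 g=\int H\sin(\sqrt{2}x)$ with $H:=\calD_1\calD_2\calD_3(Y_2^2)$. It then computes $H$ explicitly as a polynomial in $\sech^2(\tfrac{x}{2})$ times $\tanh(\tfrac{x}{2})$, rewrites $H$ as a differential polynomial applied to $\sech^2(\tfrac{x}{2})$, and reads off $\widehat{H}(\sqrt{2})$ from the single known transform $\calF[\sech^2(\tfrac{\cdot}{2})](\xi)=\sqrt{8\pi}\,\xi\cosech(\pi\xi)$. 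Your approach keeps $g$ as given, expands $Y_2^2 g$ into a linear combination of $I_n$ and $J_n$ for $n=2,4,6,8$, reduces $J_n$ to $I_n$ by integration by parts, and bootstraps all $I_n$ from $I_2$ via the recurrence coming from the ODE for $\sech^n(\tfrac{x}{2})$. The paper's method is structurally cleaner---one Fourier transform instead of four, and the Darboux structure does the organizing---while yours is more elementary and self-contained, requiring neither the explicit form of $H$ nor symbolic-algebra verification of identities like~\eqref{equ:patient_comp1}--\eqref{equ:patient_comp2}. Both land on the same $\cosech(\sqrt{2}\pi)$ factor for the same reason (the pole structure of $\sech$ on the imaginary axis), and both are complete.
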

\begin{proof}
Recall that $\mu^2 = \frac34$.
The function $h(x) = \sin(\sqrt{2}x)$ clearly satisfies $(-\px^2+1) h = 4\mu^2 h$. Define the smooth bounded function $g := \calD_3^\ast \calD_2^\ast \calD_1^\ast h$, which is given by~\eqref{equ:FGRg}. 
Using the adjoint form of the conjugation identity~\eqref{equ:conjugation_identity}, we find that 
\begin{equation*}
 L g = L \calD_3^\ast \calD_2^\ast \calD_1^\ast h = \calD_3^\ast \calD_2^\ast \calD_1^\ast (-\px^2 +1) h = 4\mu^2 g,
\end{equation*}
as desired. Next, we compute 
\begin{equation*}
 \begin{aligned}
  \int Y_2^2 g &= \int Y_2^2 \calD_3^\ast \calD_2^\ast \calD_1^\ast h = \int \calD_1 \calD_2 \calD_3 (Y_2^2) \sin\bigl(\sqrt{2} x\bigr).
 \end{aligned}
\end{equation*}
Let $H = \calD_1 \calD_2 \calD_3 (Y_2^2)$. Since $Y_2$ is even, $H$ is odd, and thus,
\begin{equation} \label{equ:FGR_integralY2sqg_widehatH}
 \begin{aligned}
  \frac12 \int Y_2^2 g = i \sqrt{\frac{\pi}{2}} \, \widehat{H}(\sqrt{2}).
 \end{aligned}
\end{equation}
By patient direct computation, we obtain 
\begin{equation} \label{equ:patient_comp1}
\begin{split}
 H(x) &= \frac{9}{256} \biggl( 875\sech^8\Bigl(\frac{x}{2}\Bigr) - 700\sech^6\Bigl(\frac{x}{2}\Bigr) + 64 \sech^4\Bigl(\frac{x}{2}\Bigr) \biggr)\tanh\Bigl(\frac{x}{2}\Bigr).
\end{split}
\end{equation}
Moreover, by patient direct computation we find
\begin{equation} \label{equ:patient_comp2}
 H(x) =  \frac{1}{256} \bigl( 28\partial_x + 17\partial_x^3 -70\partial_x^5 + 25\partial_x^7 \bigr) \Bigl[ \sech^2\Bigl(\frac{x}{2}\Bigr) \Bigr].
\end{equation}
Using that (cf. \cite[Corollary 5.7]{LS1})
\begin{equation*}
 \mathcal{F}\Bigl[ \sech^2\Bigl(\frac{\cdot}{2}\Bigr) \Bigr](\xi) = \sqrt{8\pi} \xi \cosech(\pi \xi), 
\end{equation*}
we conclude
\begin{equation} \label{equ:FGR_widehatH}
\begin{split}
 \widehat{H}(\xi) &= - \frac{i}{64} \sqrt{\frac{\pi}{2}} \bigl(-28 + 17 \xi^2 + 70 \xi^4 + 25 \xi^6\bigr)  \xi^2 \cosech(\pi \xi).	
\end{split}
\end{equation}
Combining~\eqref{equ:FGR_integralY2sqg_widehatH} and \eqref{equ:FGR_widehatH} gives~\eqref{equ:FGR}.
\end{proof}

\begin{remark}
 We determined the identities~\eqref{equ:patient_comp1} and \eqref{equ:patient_comp2} with the aid of the Wolfram Mathematica software system.
\end{remark}

Next, we use a new functional introduced in~\cite[Proposition 2]{KM22} to obtain integrated-in-time control of the internal mode component $\bm{z}$.

\begin{proposition} \label{prop:virial_z}
 For any $A > 0$ large, any $\delta > 0$ small (depending on $A$), and any $T > 0$, 
 \begin{equation*}
  \int_0^T |\bm{z}|^4 \, \ud t \lesssim A \delta^2 + \frac{1}{\sqrt{A}} \int_0^T \Bigl( \|\sigma_A \px u_1\|^2 + \frac{1}{A^2} \|\sigma_A u_1\|^2 + \frac{1}{A^2} \|\sigma_A u_2\|^2 + b_+^2 + b_-^2 \Bigr) \, \ud t.
 \end{equation*}
\end{proposition}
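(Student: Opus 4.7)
The plan is to exploit the Fermi Golden Rule from Lemma~\ref{lem:FGR} via a coupling functional between the internal mode $\bm{z}$ and a projection of $\bm{u}$ onto the resonant direction $g$, in the spirit of \cite[Proposition 2]{KM22}. Since $g$ is bounded but not in $L^2$, all pairings of $g$ against $\bm{u}$ must be localized; I would use a smooth cutoff $\chi_B$ at a fixed scale $B$ independent of $A$, and work with a functional of the schematic form
\begin{equation*}
 \calK := c_1 \, z_1 z_2 \, \langle \chi_B^2 g, u_1 \rangle + c_2 \, (z_1^2 - z_2^2) \, \langle \chi_B^2 g, u_2 \rangle + \calR(\bm z),
\end{equation*}
with $c_1, c_2 \in \bbR$ tuned so that the leading linear-in-$\bm u$ coupling terms in $\dot{\calK}$ cancel, and with $\calR(\bm z)$ a polynomial in $\bm z$ chosen to absorb the purely oscillatory remainder that appears in $\dot{\calK}$. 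By the Cauchy--Schwarz inequality, the boundedness of $g$ on $\operatorname{supp}\chi_B$, and \eqref{equ:thm_assumption_closeness}, we have $|\calK(t)| \lesssim \delta^3 \lesssim A\delta^2$, which furnishes the $A\delta^2$ boundary contribution after time integration.

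To compute $\dot{\calK}$, I would use the system~\eqref{equ:pdeode_system_u} together with the identity $Lg = 4\mu^2 g$ from Lemma~\ref{lem:FGR}. The linear-in-$\bm u$ contributions generated by the $\bm z$-factors and by the $\bm u$-factors have matched temporal frequency $2\mu$ (via $4\mu^2$), and therefore cancel up to commutator errors from $[L,\chi_B^2]g$, which are spatially supported on $\{|x|\sim B\}$. The leading nonlinear contribution reduces to
\begin{equation*}
 c_2\,(z_1^2 - z_2^2)\,\langle \chi_B^2 g, N^\perp\rangle \approx 2\Gamma c_2\,(z_1^2 - z_2^2)\,z_1^2,
\end{equation*}
with $\Gamma \neq 0$ by Lemma~\ref{lem:FGR}. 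Exploiting the harmonic-oscillator structure $\dot{(z_1^2 - z_2^2)} \approx 4\mu\, z_1 z_2$, $\dot{(2z_1 z_2)} \approx -2\mu\,(z_1^2-z_2^2)$ and the near-conservation $\dot{(|\bm z|^2)} = 2\mu^{-1} z_2 N_2$, one obtains the algebraic identity $(z_1^2 - z_2^2) z_1^2 = \tfrac14 |\bm z|^4 + \tfrac{d}{dt}\calR_0(\bm z) + \text{(cubic error)}$ for an explicit polynomial $\calR_0(\bm z)$. Choosing $\calR(\bm z)$ to match $-2\Gamma c_2\,\calR_0(\bm z)$ then yields $\dot{\calK} \gtrsim |\Gamma|\,|\bm z|^4 - (\text{errors})$, up to a global sign flip of $\calK$ if needed.

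The error terms divide into three classes: (i) commutator errors from $\chi_B'$, bounded by $|\bm z|^2\bigl(\|\sigma_A u_1\| + \|\sigma_A \px u_1\| + \|\sigma_A u_2\|\bigr)$ since $\sigma_A \gtrsim 1$ on $\{|x| \leq 2B\}$ when $A \gg B$; (ii) lower-order pieces of $\langle \chi_B^2 g, N^\perp\rangle$ involving $a_1 = b_+ + b_-$ and $u_1$, which produce cross terms controlled by the weighted norms of $\bm u$ and by $b_+^2 + b_-^2$; and (iii) internal-mode errors of size $O(\delta^4)$, absorbed into $A\delta^2$ after time integration. Applying Young's inequality in the form $|\bm z|^2 X \leq \eta|\bm z|^4 + \eta^{-1} X^2$ with $\eta$ a small absolute constant, the main $|\bm z|^4$ term is preserved while the residual $\|\sigma_A u_i\|^2$ and $\|\sigma_A \px u_1\|^2$ contributions acquire a prefactor $A^{-1/2}$ through the interplay between the $B$-scale localization of $\chi_B g$ and the $A$-weighted norms appearing in Proposition~\ref{prop:virial_u}; this is precisely the factor required to close the bootstrap against Proposition~\ref{prop:virial_u}. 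Integrating the resulting differential inequality over $[0,T]$ gives the stated bound.

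The principal obstacle is the extraction of a \emph{pointwise} $|\bm z|^4$ lower bound from $\dot{\calK}$: the bare leading term $(z_1^2 - z_2^2) z_1^2$ has no definite sign, and equals $\tfrac14|\bm z|^4$ only in an average sense along the harmonic $\mu$-periodic motion of the internal mode. Rewriting the oscillatory mismatch $(z_1^2 - z_2^2) z_1^2 - \tfrac14|\bm z|^4$ as a time derivative $\tfrac{d}{dt}\calR_0(\bm z)$, and thereby realizing it as a harmless boundary contribution, is the delicate algebraic step; its proper bookkeeping against the localization losses from $\chi_B$ and the Young's-inequality splittings is what ultimately produces the $A^{-1/2}$ prefactor central to closing the estimates.
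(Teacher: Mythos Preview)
Your overall strategy---a coupling functional between $\bm z$ and $\langle \chi\, g, \bm u\rangle$ together with a polynomial correction $\calR(\bm z)$ to extract $|\bm z|^4$ from the sign-indefinite term $(z_1^2-z_2^2)z_1^2$---is exactly right and matches the paper. The gap is in your localization scale: you take $\chi_B$ at a \emph{fixed} scale $B$ independent of $A$, and then assert that the commutator errors from $\chi_B'$ ``acquire a prefactor $A^{-1/2}$ through the interplay between the $B$-scale localization of $\chi_B g$ and the $A$-weighted norms.'' This step fails. On $\{|x|\sim B\}$ one has $\sigma_A\approx 1$ when $A\gg B$, so the commutator error is simply
\[
\Bigl|\alpha\!\int u_1\,[L,\chi_B^2]g\Bigr| \;\lesssim_B\; |\bm z|^2\bigl(\|\sigma_A\px u_1\| + \|\sigma_A u_1\|\bigr),
\]
with an implicit constant depending only on $B$. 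Young's inequality then produces $\eta^{-1}\|\sigma_A\px u_1\|^2$ with $\eta^{-1}$ an absolute constant, not $A^{-1/2}$. No interplay with $\sigma_A$ can generate smallness here, because the weight is essentially $1$ on the support of $\chi_B'$.

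The paper instead localizes at scale $A$ itself, using $\chi_A$. Then $|\chi_A'|\lesssim A^{-1}$ on a set of measure $\sim A$, so Cauchy--Schwarz yields $\bigl|\int(\px u_1)\,g\,\chi_A'\bigr|\lesssim A^{-1/2}\|\sigma_A\px u_1\|$, which is the source of the $A^{-1/2}$ prefactor. The price is that the other coupling term $\int u_2\, g\,\chi_A$ now grows like $\sqrt{A}\,\|\sigma_A u_2\|$; the paper absorbs this by taking $\delta$ small depending on $A$ (concretely $\delta\le A^{-7/2}$), which is permitted by the hypothesis. Your fixed-$B$ cutoff avoids this growth but cannot produce the needed smallness on the commutator, so the bootstrap against Proposition~\ref{prop:virial_u} would not close.
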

\begin{proof}
We define the variables
\begin{align*}
 \alpha := z_1^2 - z_2^2, \quad \beta := 2 z_1 z_2.
\end{align*}
From \eqref{equ:pdeode_system_u} we compute 
\begin{equation} \label{equ:prop_internal_mode_equations_alphabeta}
 \begin{aligned}
  \pt \alpha &= 2\mu\beta - 2\mu^{-1} z_2 N_2, \\
  \pt \beta &= -2\mu\alpha + 2\mu^{-1} z_1 N_2,
 \end{aligned}
\end{equation}
and
\begin{equation} \label{equ:pt_modzsquared}
  \pt \bigl( |\bm{z}|^2 \bigr) = 2\mu^{-1} z_2 N_2.
\end{equation}
We use the functional 
\begin{equation}
 \calJ := \alpha \int u_2 g \chi_A - 2\mu\beta \int u_1 g \chi_A + \frac{\Gamma}{2\mu} \beta |\bm{z}|^2,
\end{equation}
where $g$ and $\Gamma$ are furnished by Lemma~\ref{lem:FGR}.
By direct computation, using~\eqref{equ:pdeode_system_u} and \eqref{equ:prop_internal_mode_equations_alphabeta}, we find 
\begin{equation*}
 \begin{aligned}
  \pt \calJ &= - \alpha \int (L - 4\mu^2) u_1 g \chi_A - \alpha \biggl( \Gamma |\bm{z}|^2 - \int N^\perp g \chi_A \biggr) \\
  &\quad - 2\mu^{-1} N_2 \biggl( z_2 \int u_2 g \chi_A + 2\mu z_1 \int u_1 g \chi_A \biggr) + \mu^{-2} \Gamma N_2 \bigl( z_1 |\bm{z}|^2 + z_2 \beta \bigr) \\
  &= J_1 + J_2 + J_3 + J_4.
 \end{aligned}
\end{equation*}
For the first term $J_1$ we integrate by parts and exploit that $(L-4\mu^2)g = 0$, to rewrite it as
\begin{equation*}
 J_1 = - \alpha \biggl( 2 \int (\px u_1) g \chi_A' + \int u_1 g \chi_A'' \biggr).
\end{equation*}
Using $|g| \lesssim 1$ and $\sigma_A \gtrsim 1$ on $[-2A, 2A]$, we obtain by Cauchy-Schwarz 
\begin{equation} \label{equ:prop_internal_mode_J1_bound}
 |J_1| \lesssim \frac{1}{\sqrt{A}} \Bigl( \|\sigma_A \px u_1\|^2 + \frac{1}{A^2} \|\sigma_A u_1\|^2 + |\bm{z}|^4 \Bigr).
\end{equation}

Next, we turn to the term $J_2$ and decompose it as
\begin{equation*}
 \begin{aligned}
  J_2 &= -\alpha \biggl( \Gamma |\bm{z}|^2 - z_1^2 \int P_c(Y_2^2) g \chi_A \biggr) + \alpha \int \bigl( N^\perp - z_1^2 P_c(Y_2^2) \bigr) g \chi_A = J_{2, 1} + J_{2,2}.
 \end{aligned}
\end{equation*}
For the analysis of $J_{2,1}$ observe that $\langle Y_0, g \rangle = \langle Y_2, g \rangle = 0$ in view of $L g = 4\mu^2 g$, $L Y_0 = -\nu^2 Y_0$, and $L Y_2 = \mu^2 Y_2$.
Recalling~\eqref{equ:FGR} and that $P_c(Y_2^2) = Y_2^2 - \langle Y_0, Y_2^2 \rangle Y_0 - \langle Y_2, Y_2^2 \rangle Y_2$, we write
\begin{equation} \label{equ:prop_internal_mode_rewrite1}
 \begin{aligned}
  \Gamma |\bm{z}|^2 - z_1^2 \int P_c(Y_2^2) g \chi_A &= \Gamma |\bm{z}|^2 - 2 \Gamma z_1^2 + z_1^2 \int Y_2^2 g (1-\chi_A) \\
  &\quad - z_1^2 \langle Y_0, Y_2^2 \rangle \int Y_0 g (1-\chi_A) - z_1^2 \langle Y_2, Y_2^2 \rangle \int Y_2 g (1-\chi_A).
 \end{aligned}
\end{equation}
Since $Y_2 \in \calY$ and $|g| \lesssim 1$, we have 
\begin{equation*}
 \begin{aligned}
  \biggl| z_1^2 \int Y_2^2 g (1-\chi_A) \biggr| \lesssim |\bm{z}|^2 \int_{|x| \geq A} \rho^6 \lesssim \rho(A) |\bm{z}|^2,
 \end{aligned}
\end{equation*}
with analogous bounds for the last two terms on the right-hand side of~\eqref{equ:prop_internal_mode_rewrite1}. 
Finally, noting that $\Gamma |\bm{z}|^2 - 2 \Gamma z_1^2 = - \Gamma \alpha$, we conclude 
\begin{equation} \label{equ:prop_internal_mode_J21_bound}
 \begin{aligned}
  \bigl| J_{2,1} - \Gamma \alpha^2 \bigr| \lesssim \rho(A) |\bm{z}|^4.
 \end{aligned}
\end{equation}
To estimate the term $J_{2,2}$ we use that $\chi_A \lesssim \sigma_A^2$ and that $Y_0, Y_2 \in \calY$ to obtain by Cauchy-Schwarz
\begin{equation*}
 \begin{aligned}
  |J_{2,2}| &\lesssim |\bm{z}|^2 \int \bigl| N^\perp - z_1^2 P_c(Y_2^2) \bigr| \chi_A \\
  &\lesssim |\bm{z}|^2 \bigl( \|\sigma_A u_1\|^2 + b_+^2 + b_-^2 + \|\sigma_A u_1\| |z_1| + (|b_+| + |b_-|)|z_1| \bigr),
 \end{aligned}
\end{equation*}
whence by~\eqref{equ:smallness} we have 
\begin{equation} \label{equ:prop_internal_mode_J22_bound}
 \begin{aligned}
  |J_{2,2}| \lesssim \delta \bigl( |\bm{z}|^4 + \|\sigma_A u_1\|^2 + b_+^2 + b_-^2 \bigr).
 \end{aligned}
\end{equation}
Combining~\eqref{equ:prop_internal_mode_J21_bound} and \eqref{equ:prop_internal_mode_J22_bound}, we find 
\begin{equation*}
 \bigl| J_2 - \Gamma \alpha^2 \bigr| \lesssim \bigl( \delta + \rho(A) \bigr) |\bm{z}|^4 + \delta \bigl( \|\sigma_A u_1\|^2 + b_+^2 + b_-^2 \bigr).
\end{equation*}

To estimate the term $J_3$ we first use $Y_2 \in \calY$ and the pointwise bound $|N| \lesssim u_1^2 + z_1^2 + b_+^2 + b_-^2$ to deduce
\begin{equation*}
 |N_2| \lesssim \|\sigma_A u_1\|^2 + z_1^2 + b_+^2 + b_-^2.
\end{equation*}
Using that $\sigma_A \gtrsim 1$ on $[-2A, 2A]$, we then obtain by Cauchy-Schwarz
\begin{equation}
 \begin{aligned}
  |J_3| &\lesssim |N_2| |\bm{z}| \sqrt{A} \bigl( \|\sigma_A u_1\| + \|\sigma_A u_2\| \bigr) \\
  &\lesssim \delta |\bm{z}|^4  + \delta A \bigl( \|\sigma_A u_1\|^2 + \|\sigma_A u_2\|^2 + b_+^2 + b_-^2 \bigr) \\
  &\lesssim \delta |\bm{z}|^4  + \frac{1}{\sqrt{A}} \Bigl( \frac{1}{A^2} \|\sigma_A u_1\|^2 + \frac{1}{A^2} \|\sigma_A u_2\|^2 + b_+^2 + b_-^2 \
  \Bigr),
 \end{aligned}
\end{equation}
where we chose $\delta \leq A^{-\frac72}$ to pass to the last line.
Similarly, we obtain
\begin{equation}
 \begin{aligned}
  |J_4| \lesssim |N_2| |\bm{z}|^3 \lesssim \delta |\bm{z}|^4 + \frac{1}{\sqrt{A}} \Bigl( \frac{1}{A^2} \|\sigma_A u_1\|^2 + b_+^2 + b_-^2 \Bigr).
 \end{aligned}
\end{equation}
Thus, for $A$ large enough, we find that
\begin{equation} \label{equ:prop_internal_mode_J_bound}
 \bigl| \pt \calJ - \Gamma \alpha^2 \bigr| \lesssim \frac{1}{\sqrt{A}} \Bigl( \|\sigma_A \px u_1\|^2 + \frac{1}{A^2} \|\sigma_A u_1\|^2 + \frac{1}{A^2} \|\sigma_A u_2\|^2 + b_+^2 + b_-^2 + |\bm{z}|^4 \Bigr).
\end{equation}

Next, define 
\begin{equation*}
 \calZ := \frac{\Gamma}{4\mu} \alpha \beta.
\end{equation*}
Using~\eqref{equ:pdeode_system_u} we compute 
\begin{equation*}
 \begin{aligned}
  \pt \calZ &= \frac{\Gamma}{2} \bigl( \beta^2 - \alpha^2 \bigr) + \frac{\Gamma}{2\mu^2} N_2 \bigl( - \beta z_2 + \alpha z_1 \bigr).
 \end{aligned}
\end{equation*}
The last term can be estimated analogously to the term $J_4$ above. Combining with~\eqref{equ:prop_internal_mode_J_bound} and noting that $|\bm{z}|^4 = \alpha^2 + \beta^2$, we arrive at the estimate 
\begin{equation} \label{equ:prop_internal_mode_JZ_bound}
 \Bigl| \pt \calJ + \pt \calZ - \frac{\Gamma}{2} |\bm{z}|^4 \Bigr| \lesssim \frac{1}{\sqrt{A}} \Bigl( \|\sigma_A \px u_1\|^2 + \frac{1}{A^2} \|\sigma_A u_1\|^2 + \frac{1}{A^2} \|\sigma_A u_2\|^2 + b_+^2 + b_-^2 + |\bm{z}|^4 \Bigr). 
\end{equation}
Finally, by \eqref{equ:smallness} we have $|\calZ| \lesssim \delta^4$ and we infer $|\calJ| \lesssim \sqrt{A} \delta^3$ after an application of Cauchy-Schwarz. The asserted estimate now follows from~\eqref{equ:prop_internal_mode_JZ_bound} upon integrating in time and choosing $A$ sufficiently large (independently of the size of $\delta$).
\end{proof}

\section{Controlling the unstable mode} \label{sec:unstable_mode}

In this section we establish estimates for the variables $b_+$ and $b_-$ related to the unstable mode.

\begin{proposition} \label{prop:virial_b}
For any $\delta > 0$ small and any $T>0$, 
 \begin{equation*} 
  \int_0^T \bigl( b_+^2 + b_-^2 \bigr) \, \ud t \lesssim \delta^2 + \delta \int_0^T \|\rho u_1\|^2 \, \ud t + \int_0^T |\bm{z}|^4 \, \ud t.
 \end{equation*}
\end{proposition}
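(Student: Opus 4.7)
The plan is to bound $b_-$ and $b_+$ separately via two different mechanisms reflecting the stable/unstable nature of their respective equations, and then combine.

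First, I would establish the pointwise bound
\begin{equation*}
 |N_0(t)| \lesssim b_+^2 + b_-^2 + |\bm{z}|^2 + \|\rho u_1\|^2,
\end{equation*}
which follows by expanding $N = (a_1Y_0 + z_1 Y_2 + u_1)^2$, pairing against $Y_0 \in \calY$, using Cauchy--Schwarz on the cross terms (so that $(|a_1|+|z_1|)\|\rho u_1\| \leq \frac12(a_1^2+z_1^2) + \|\rho u_1\|^2$), and using $a_1 = b_+ + b_-$.

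For the damped mode $b_-$, I would run a direct Lyapunov computation: from $\pt b_- = -\nu b_- - (2\nu)^{-1}N_0$, one obtains
\begin{equation*}
 \pt (b_-^2) = -2\nu b_-^2 - \nu^{-1} b_- N_0.
\end{equation*}
Integrating over $[0,T]$, using $|b_-(0)|, |b_-(T)| \lesssim \delta$, and invoking the $N_0$ bound together with $|b_-| |\bm{z}|^2 \leq \epsilon b_-^2 + (4\epsilon)^{-1}|\bm{z}|^4$, yields (after absorbing $(\epsilon+\delta)\int b_-^2$ into the left side for $\epsilon, \delta$ small)
\begin{equation*}
 \int_0^T b_-^2 \, \ud t \lesssim \delta^2 + \delta \int_0^T b_+^2 \, \ud t + \int_0^T |\bm{z}|^4 \, \ud t + \delta \int_0^T \|\rho u_1\|^2 \, \ud t.
\end{equation*}

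The main obstacle is the unstable mode $b_+$, where a direct energy estimate fails because the linear part grows like $e^{\nu t}$. Here I would crucially invoke the global stability hypothesis~\eqref{equ:thm_assumption_closeness}, which forces $|b_+(t)| \lesssim \delta$ for all $t \geq 0$. Writing Duhamel's formula
\begin{equation*}
 b_+(t) = e^{\nu t}\bigl[ b_+(0) + (2\nu)^{-1} \textstyle \int_0^t e^{-\nu s} N_0(s)\, \ud s \bigr],
\end{equation*}
the uniform boundedness of $b_+(t)$ forces the bracket to vanish as $t \to \infty$ (the integral converges since $|N_0| \lesssim \delta^2$ uniformly), so that $b_+(0) = -(2\nu)^{-1}\int_0^\infty e^{-\nu s} N_0(s)\, \ud s$ and hence
\begin{equation*}
 b_+(t) = -(2\nu)^{-1}\int_t^\infty e^{-\nu(s-t)} N_0(s)\, \ud s.
\end{equation*}
By Cauchy--Schwarz against the probability density $\nu e^{-\nu(s-t)}\mathbf{1}_{s\geq t}$ and then Fubini on $[0,T]$, one obtains
\begin{equation*}
 \int_0^T b_+^2 \, \ud t \lesssim \int_0^T N_0(s)^2 \, \ud s + \int_T^\infty e^{-\nu(s-T)} N_0(s)^2 \, \ud s.
\end{equation*}
The second integral is bounded by $\delta^4$ using $|N_0| \lesssim \delta^2$. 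For the first, squaring the $N_0$ estimate and exploiting the smallness $|b_\pm|, \|\rho u_1\|, |\bm{z}| \lesssim \delta$ (to convert one power of each quartic quantity into $\delta^2$) gives
\begin{equation*}
 \int_0^T b_+^2 \, \ud t \lesssim \delta^2 + \delta^2 \int_0^T (b_+^2 + b_-^2)\, \ud t + \int_0^T |\bm{z}|^4 \, \ud t + \delta^2 \int_0^T \|\rho u_1\|^2 \, \ud t.
\end{equation*}
Adding this to the $b_-$ estimate and absorbing the $\delta$-small self-terms for $\delta$ sufficiently small concludes the proof.
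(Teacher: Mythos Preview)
Your argument is correct, but the paper takes a shorter and more unified route. Instead of splitting into the damped mode $b_-$ (Lyapunov estimate) and the unstable mode $b_+$ (backward Duhamel/trapping representation), the paper uses the single functional $\calB := b_+^2 - b_-^2$, whose time derivative is
\[
 \pt \calB = 2\nu(b_+^2 + b_-^2) + \nu^{-1} N_0 (b_+ + b_-).
\]
The point is that the ``wrong'' signs of the two linear terms cancel in this combination, producing a definite quantity $2\nu(b_+^2+b_-^2)$ directly; one then bounds the remainder exactly via your $|N_0|$ estimate and Young's inequality on $z_1^2(|b_+|+|b_-|)$, and integrates using only the boundary bound $|\calB(0)|+|\calB(T)|\lesssim\delta^2$.

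Your approach has the advantage of being the standard, transferable technique for unstable modes (the backward integral representation for $b_+$ is precisely the mechanism behind the center-stable manifold in Theorem~\ref{thm:orbital_stability}), and it makes the role of the global-in-time hypothesis \eqref{equ:thm_assumption_closeness} very explicit. The paper's functional $\calB$ is slicker here because it avoids the Fubini step and the tail integral over $[T,\infty)$, and it never needs to look beyond the interval $[0,T]$ except through the pointwise smallness $|b_\pm|\lesssim\delta$.
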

\begin{proof}
As in \cite[Lemma 8]{KMM19}, we use the functional
\begin{equation*}
 \calB := b_+^2 - b_-^2.
\end{equation*}
Using~\eqref{equ:pdeode_system_u} we compute
\begin{equation} \label{equ:prop_unstable_mode_pt_B}
 \pt \calB = 2\nu (b_+^2 + b_-^2) + \nu^{-1} N_0 (b_+ + b_-).
\end{equation}
Since $|N| \lesssim u_1^2 + z_1^2 + b_+^2 + b_-^2$, we have 
\begin{equation*}
 |N_0| \lesssim \|\rho u_1\|^2 + z_1^2 + b_+^2 + b_-^2.
\end{equation*}
Thus, using~\eqref{equ:smallness} we obtain by Cauchy-Schwarz for any $\gamma > 0$ that
\begin{equation*}
 \begin{aligned}
  |N_0 (b_+ + b_-)| &\lesssim \delta \|\rho u_1\|^2 + \delta (b_+^2 + b_-^2) + z_1^2 (|b_+| + |b_-|) \\
  &\lesssim \delta \|\rho u_1\|^2 + \delta (b_+^2 + b_-^2) + \gamma^{-1} |\bm{z}|^4 + \gamma (b_+^2 + b_-^2).
 \end{aligned}
\end{equation*}
For $\delta > 0$ and $\gamma > 0$ sufficiently small (depending only on absolute constants), the asserted estimate now follows from~\eqref{equ:prop_unstable_mode_pt_B} upon integrating in time.
\end{proof}

\section{Bounds for the iterated Darboux transformation} \label{sec:darboux_transformation}

In this section we establish several bounds for the regularized iterated Darboux transformation~$S_\varepsilon$.
We begin with some preparations, building on computations from~\cite{LS1}, and introduce the auxiliary functions
\begin{equation*}
 Z_\ell(x) := \sech^\ell\Bigl(\frac{x}{2}\Bigr), \quad 1 \leq \ell \leq 3.
\end{equation*}
Observe that $Y_0 = c_0 Z_3$. Then we have $\calD_\ell = Z_\ell \cdot \px \cdot Z_\ell^{-1}$, $1 \leq \ell \leq 3$, and it is evident that $\calD_\ell Z_\ell = 0$, $1 \leq \ell \leq 3$.
Correspondingly, the integral operators
\begin{equation*}
 \begin{aligned}
  \calR_\ell[f](x) &:= Z_\ell(x) \int_0^x Z_\ell(y)^{-1} f(y) \, \ud y, \quad 1 \leq \ell \leq 3,
 \end{aligned}
\end{equation*}
are right-inverse operators for $\calD_\ell$, i.e., $\calD_\ell \calR_\ell[f] = f$, $1 \leq \ell \leq 3$,
and the operator 
\begin{equation*}
 \calR[f] := \calR_3[ \calR_2[ \calR_1[ f ] ] ] 
\end{equation*}
satisfies $\calD_1 \calD_2 \calD_3 \calR[f] = f$.
Integrating by parts, we obtain 
\begin{equation*}
 \begin{aligned}
  \calR_\ell[\calD_\ell f] = f - f(0) Z_\ell, \quad 1 \leq \ell \leq 3,
 \end{aligned}
\end{equation*}
and thus 
\begin{equation*}
 \begin{aligned}
  \calR[\calD_1\calD_2\calD_3 f] = f - f(0) Z_3 - (\calD_3 f)(0) \calR_3[Z_2] - (\calD_2 \calD_3 f)(0) \calR_3[\calR_2[Z_1]].
 \end{aligned}
\end{equation*}
In view of the identities
\begin{equation*}
 \begin{aligned}
  (\calD_3 f)(0) &= f'(0), \quad (\calD_2 \calD_3 f)(0) = f''(0) + \tfrac34 f(0), 
 \end{aligned}
\end{equation*}
and
\begin{equation} \label{equ:RZells_justYjs}
 \begin{aligned}
  Z_3(x) &= \sech^3\Bigl(\frac{x}{2}\Bigr) = c_0^{-1} Y_0(x), \\
  \calR_3[Z_2](x) &= 2 \sech^2\Bigl(\frac{x}{2}\Bigr) \tanh\Bigl(\frac{x}{2}\Bigr) = 2 c_1^{-1} Y_1(x), \\
  \calR_3[\calR_2[Z_1]](x) &= 2 \sech\Bigl(\frac{x}{2}\Bigr) \tanh^2\Bigl(\frac{x}{2}\Bigr) = \tfrac12 c_0^{-1} Y_0(x) - \tfrac12 c_2^{-1} Y_2(x),
 \end{aligned}
\end{equation}
we find 
\begin{equation*}
 \begin{aligned}
  \calR[\calD_1\calD_2\calD_3 f] &= f - c_0^{-1} f(0) Y_0  - 2 c_1^{-1} f'(0) Y_1 - \bigl( f''(0) + {\textstyle \frac34} f(0) \bigr) \bigl( {\textstyle \frac12} c_0^{-1} Y_0 - {\textstyle \frac12} c_2^{-1} Y_2 \bigr).
 \end{aligned}
\end{equation*}
In particular, it follows that
\begin{equation} \label{equ:PcRDsg_equal_Pcg}
 P_c \calR[\calD_1\calD_2\calD_3 f] = P_c f.
\end{equation}

\medskip 

Next, we present a few technical estimates for the smoothing operator $X_\varepsilon$, which are needed below in the proofs of the main bounds for the regularized iterated Darboux transformation $S_\varepsilon$.
\begin{lemma} \label{lem:technical_estimates}
\begin{itemize}[leftmargin=0.63cm]
\item[(a)] For any $0 \leq \varepsilon \leq 1$ and $f \in L^2$, it holds
\begin{equation} \label{equ:lemma_technical_est_basic_Fourier}
 \begin{aligned}
  \|X_\varepsilon \px^m f\| \leq C \varepsilon^{-\frac{m}{2}} \|f\|, \quad 0 \leq m \leq 4. 
 \end{aligned}
\end{equation} 

\item[(b)] There exists $\varepsilon_1 > 0$ small such that for any $0 < \varepsilon \leq \varepsilon_1$, $K \geq 1$, and $f \in L^2$, we have 
\begin{equation} \label{equ:lemma_technical_est_sechXepspx}
 \begin{aligned}
  \Bigl\| \sech\Bigl(\frac{x}{K}\Bigr) X_\varepsilon \px^m f \Bigr\| &\leq C \varepsilon^{-\frac{m}{2}} \Bigl\| X_\varepsilon \Bigl[ \sech\Bigl(\frac{x}{K}\Bigr) f \Bigr] \Bigr\|, \quad 0 \leq m \leq 4,
 \end{aligned}
\end{equation}
and 
\begin{equation} \label{equ:lemma_technical_est_coshXepssech}
 \begin{aligned}
  \Bigl\| \cosh\Bigl(\frac{x}{K}\Bigr) X_\varepsilon \Bigl[ \sech\Bigl(\frac{x}{K}\Bigr) \Bigr] f \Bigr\| &\leq C \|X_\varepsilon f\|.
 \end{aligned}
\end{equation}
\end{itemize}
\end{lemma}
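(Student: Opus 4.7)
\emph{Part (a).} The operator $X_\varepsilon \partial_x^m$ is a Fourier multiplier with symbol
$m_\varepsilon(\xi) := (i\xi)^m/(1+\varepsilon \xi^2)^2$. Performing the substitution $\eta = \sqrt{\varepsilon}\,\xi$ yields
\[
 |m_\varepsilon(\xi)| = \varepsilon^{-m/2}\, \frac{|\eta|^m}{(1+\eta^2)^2},
\]
whose supremum over $\eta \in \bbR$ is finite precisely when $0 \leq m \leq 4$. Plancherel's identity then gives the bound $\|X_\varepsilon \partial_x^m f\| \leq C\varepsilon^{-m/2} \|f\|$.

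\emph{Part (b).} Both estimates are manifestations of the fact that $X_\varepsilon$ nearly commutes with multiplication by the slowly-varying weight $w(x) = \sech(x/K)$ as soon as $\sqrt{\varepsilon}$ is small compared with $K$. The crucial algebraic identity is
\[
 [X_\varepsilon, w] = - X_\varepsilon \bigl[ X_\varepsilon^{-1}, w \bigr] X_\varepsilon = - X_\varepsilon \bigl[ (1-\varepsilon \partial_x^2)^2, w \bigr] X_\varepsilon.
\]
Expanding $[(1-\varepsilon\partial_x^2)^2, w]$ by the Leibniz rule produces a finite sum of terms of the form $\varepsilon^j w^{(k)} \partial_x^l$ with $j \in \{1,2\}$, $k \geq 1$, and $k + l \leq 2j$. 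The key analytic input is that, since $w = \sech(x/K)$, the ratios $w^{(k)}/w$ are bounded by $C_k K^{-k}$; writing $w^{(k)} = \psi_k w$, each coefficient function $\psi_k$ is uniformly bounded by $CK^{-k}$. Moreover, the kernel $G_\varepsilon^{(l)}(x-y)$ of $X_\varepsilon \partial_x^l$ is concentrated on scale $\sqrt{\varepsilon}$, and for $\sqrt{\varepsilon} \leq K/4$ the weighted kernel satisfies
\[
 \bigl\| e^{|z|/K} G_\varepsilon^{(l)}(z) \bigr\|_{L^1(\bbR)} \leq C \varepsilon^{-l/2},
\]
together with the pointwise bound $w(x)/w(y) \leq e^{|x-y|/K}$ (equivalently $v(y)/v(x) \leq e^{|x-y|/K}$ for the second estimate).

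To prove \eqref{equ:lemma_technical_est_coshXepssech}, decompose with $v = \sech(x/K)$
\[
 v^{-1} X_\varepsilon[v f] = X_\varepsilon f - v^{-1} X_\varepsilon \bigl[ X_\varepsilon^{-1}, v \bigr] X_\varepsilon f,
\]
set $g = X_\varepsilon f$, and bound each commutator term $\varepsilon^j v^{-1} X_\varepsilon[\psi_k v \partial_x^l g]$ via integration by parts in $y$, Young's inequality, and the kernel estimate above; the constraint $k+l \leq 2j$ combined with $k \geq 1$ gives a total contribution of order $\sqrt{\varepsilon}/K$ times $\|g\|$, absorbable once $\varepsilon$ is taken small enough. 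For \eqref{equ:lemma_technical_est_sechXepspx}, the $\varepsilon^{-m/2}$ factor arises from the Fourier bound in (a); after using $X_\varepsilon \partial_x^m = \partial_x^m X_\varepsilon$ and the analogous commutator decomposition, the proof reduces to controlling operators of the form $w X_\varepsilon \partial_x^m w^{-1} X_\varepsilon^{-1}$ by the same kernel/commutator analysis.

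\emph{Main obstacle.} The principal difficulty is avoiding derivative loss: expanding $[(1-\varepsilon\partial_x^2)^2, w]$ introduces up to three spatial derivatives falling on generic $L^2$ data. The resolution is twofold---first, each such derivative comes paired with a factor of $\varepsilon^{1/2}$ via the smoothing action of $X_\varepsilon$ (part (a)), and second, each derivative $w^{(k)}$ supplies a compensating power of $K^{-1}$. The bookkeeping $k + l \leq 2j$ is what ensures these powers combine favorably to yield an overall small prefactor in the commutator, proportional to $\sqrt{\varepsilon}/K$, which is the quantity the restriction $0 < \varepsilon \leq \varepsilon_1$ is designed to make small uniformly in $K \geq 1$.
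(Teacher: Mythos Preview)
Your argument for part (a) matches the paper exactly. For part (b), your approach is correct but genuinely different from the paper's. The paper does not work with the convolution kernel of $X_\varepsilon$ at all; instead, for \eqref{equ:lemma_technical_est_sechXepspx} it sets $h=\sech(x/K)X_\varepsilon f$ and $k=X_\varepsilon[\sech(x/K)f]$, expands $(1-\varepsilon\partial_x^2)^2[\cosh(x/K)h]$ by the product rule, and equates it with $\cosh(x/K)(1-\varepsilon\partial_x^2)^2k$. This yields an operator identity $\calT h = (1-\varepsilon\partial_x^2)^2k + (\text{lower order in }h)$ for the constant-coefficient Fourier multiplier $\calT=(1-\varepsilon\partial_x^2)^2 - 2\varepsilon K^{-2}+\varepsilon^2K^{-4}$, which is then inverted on the Fourier side to obtain $\|h\|\le C\|k\|+C\sqrt{\varepsilon}\,\|h\|$; the last term is absorbed for $\varepsilon$ small. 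Your route---expanding $[X_\varepsilon,w]=-X_\varepsilon[(1-\varepsilon\partial_x^2)^2,w]X_\varepsilon$ and bounding each term via the weighted $L^1$ kernel estimate $\|e^{|z|/K}G_\varepsilon^{(l)}\|_{L^1}\lesssim\varepsilon^{-l/2}$ together with $w(x)/w(y)\le e^{|x-y|/K}$---is a legitimate physical-space alternative that makes the near-locality of $X_\varepsilon$ explicit; the bookkeeping you describe (each commutator term carries $\varepsilon^{(k+l'')/2}K^{-(k+l'')}$ with $k+l''\ge 1$) is correct. The paper's approach is slightly cleaner in that it never leaves $L^2$/Plancherel and sidesteps explicit kernel computations, while yours is more flexible and would extend verbatim to other slowly varying weights. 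One small point to watch in your sketch: for $m=4$ the multiplier $\xi^4(1+\varepsilon\xi^2)^{-2}$ is merely bounded, not integrable, so the kernel of $X_\varepsilon\partial_x^4$ contains a multiple of the Dirac delta; you should either treat $m=4$ by splitting off $\varepsilon^{-2}\operatorname{Id}$ first, or reduce to $m\le 3$ by pulling one derivative outside.
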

\begin{proof}
The bounds~\eqref{equ:lemma_technical_est_basic_Fourier} follow by elementary Fourier analysis. We establish the estimates~\eqref{equ:lemma_technical_est_sechXepspx} and~\eqref{equ:lemma_technical_est_coshXepssech} following closely the strategy of the proof of~\cite[Lemma 4.7]{KMMV20}. We begin with the estimate~\eqref{equ:lemma_technical_est_sechXepspx} in the case $m=0$. Let 
\begin{equation*}
 h(x) := \sech\Bigl(\frac{x}{K}\Bigr) X_\varepsilon f, \quad k(x) := X_\varepsilon \Bigl[ \sech\Bigl(\frac{x}{K}\Bigr) f \Bigr].
\end{equation*}
Then our goal is to show that $\|h\| \leq C \|k\|$ uniformly for all small $\varepsilon > 0$ and $K \geq 1$. We compute 
\begin{equation*}
 \begin{aligned}
  f &= (1-\varepsilon \px^2)^2 \Bigl[ \cosh\Bigl(\frac{x}{K}\Bigr) h \Bigr] = (1 - 2\varepsilon \px^2 + \varepsilon^2 \px^4) \Bigl[ \cosh\Bigl(\frac{x}{K}\Bigr) h \Bigr] \\
  &= \cosh\Bigl(\frac{x}{K}\Bigr) (1-\varepsilon\px^2)^2 h + \cosh\Bigl(\frac{x}{K}\Bigr) \biggl( - \frac{2\varepsilon}{K^2} h - \frac{4\varepsilon}{K} \tanh\Bigl(\frac{x}{K}\Bigr) h' \biggr) \\
  &\quad + \cosh\Bigl(\frac{x}{K}\Bigr) \biggl( \frac{\varepsilon^2}{K^4} h + \frac{4 \varepsilon^2}{K^3} \tanh\Bigl(\frac{x}{K}\Bigr) h' + \frac{6\varepsilon^2}{K^2} h'' + \frac{4\varepsilon^2}{K} \tanh\Bigl(\frac{x}{K}\Bigr) h''' \biggr).
 \end{aligned}
\end{equation*}
On the other hand, we have 
\begin{equation*}
 f = \cosh\Bigl(\frac{x}{K}\Bigr) (1-\varepsilon\px^2)^2 k,
\end{equation*}
and thus,
\begin{equation} \label{equ:lemma_technical_est_bound1}
 \begin{aligned}
  (1-\varepsilon\px^2)^2 k &= \biggl[ (1-\varepsilon\px^2)^2 - \frac{2\varepsilon}{K^2} + \frac{\varepsilon^2}{K^4} \biggr] h + \biggl( -\frac{4\varepsilon}{K} + \frac{4\varepsilon^2}{K^3} \biggr) \tanh\Bigl(\frac{x}{K}\Bigr) h' \\
  &\quad + \frac{6\varepsilon^2}{K^2} h'' + \frac{4\varepsilon^2}{K} \tanh\Bigl(\frac{x}{K}\Bigr) h'''.
 \end{aligned}
\end{equation}
Set 
\begin{equation*}
 \calT := \biggl[ (1-\varepsilon\px^2)^2 - \frac{2\varepsilon}{K^2} + \frac{\varepsilon^2}{K^4} \biggr].
\end{equation*}
Then \eqref{equ:lemma_technical_est_bound1} implies
\begin{equation} \label{equ:lemma_technical_est_bound2}
 \begin{aligned}
  h &= \calT^{-1} (1-\varepsilon\px^2)^2 k - \biggl( -\frac{4\varepsilon}{K} + \frac{4\varepsilon^2}{K^3} \biggr) \calT^{-1} \Bigl[ \tanh\Bigl(\frac{x}{K}\Bigr) h' \Bigr] \\
  &\quad - \frac{6\varepsilon^2}{K^2} \calT^{-1} \bigl[ h'' \bigr] - \frac{4\varepsilon^2}{K} \calT^{-1} \Bigl[ \tanh\Bigl(\frac{x}{K}\Bigr) h''' \Bigr].
 \end{aligned}
\end{equation}
Note that by elementary Fourier analysis there exists an absolute constant $C > 0$ such that uniformly for all small $\varepsilon > 0$ and all $K \geq 1$, we have the operator norm bounds
\begin{equation} \label{equ:lemma_technical_est_bound3}
 \begin{aligned}
  \bigl\| \calT^{-1} (1-\varepsilon\px^2)^2 \bigr\|_{L^2 \to L^2} &\leq C, \quad \bigl\| \calT^{-1} \px^m \bigr\|_{L^2 \to L^2} &\leq C \varepsilon^{-\frac{m}{2}}, \quad 0 \leq m \leq 4.
 \end{aligned}
\end{equation}
Upon rewriting 
\begin{equation*} 
 \begin{aligned}
  \tanh\Bigl(\frac{x}{K}\Bigr) h' &= \Bigl[ \tanh\Bigl(\frac{x}{K}\Bigr) h \Bigr]' - \frac{1}{K} \tanh'\Bigl(\frac{x}{K}\Bigr) h, \\
  \tanh\Bigl(\frac{x}{K}\Bigr) h''' &= \Bigl[ \tanh\Bigl(\frac{x}{K}\Bigr) h \Bigr]''' - \frac{3}{K} \Bigl[ \tanh' \Bigl(\frac{x}{K}\Bigr) h \Bigr]'' + \frac{3}{K^2} \Bigl[ \tanh''\Bigl(\frac{x}{K}\Bigr) h \Bigr]' - \frac{1}{K^3} \tanh'''\Bigl(\frac{x}{K}\Bigr) h,
 \end{aligned}
\end{equation*}
we conclude from \eqref{equ:lemma_technical_est_bound2} and \eqref{equ:lemma_technical_est_bound3} that
\begin{equation*}
 \|h\| \leq C \|k\| + C \varepsilon^\hf \|h\|.
\end{equation*}
The asserted estimate~\eqref{equ:lemma_technical_est_sechXepspx} in the case $m=0$ follows for sufficiently small $\varepsilon > 0$. The proofs of the estimates~\eqref{equ:lemma_technical_est_sechXepspx} in the cases $1 \leq m \leq 4$ are analogous, and the estimate~\eqref{equ:lemma_technical_est_coshXepssech} can be established similarly as in~\cite[Lemma 4.7]{KM22}. 
\end{proof}

In the following proposition we establish weighted $L^2$ bounds for the operator $S_\varepsilon$.
\begin{lemma} \label{lem:transfer1}
 For any $A > 0$ large, any $\varepsilon > 0 $ small, and any $u \in H^1$, 
 \begin{align}
  \|\sigma_A S_\varepsilon u\| &\lesssim \varepsilon^{-\thf} \|\sigma_A u\|, \label{equ:sigmaA_Seps_bound1} \\
  \|\sigma_A \px S_\varepsilon u\| &\lesssim \varepsilon^{-\thf} \|\sigma_A \px u\| + \|\rho u\|. \label{equ:sigmaA_Seps_bound2}
 \end{align}
\end{lemma}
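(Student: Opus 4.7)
The plan is to reduce both bounds to the weighted mollifier estimates in Lemma~\ref{lem:technical_estimates}(b), using the explicit form of the iterated Darboux transformation together with integration by parts to shuffle derivatives onto coefficients. Writing $g_\ell(x) = \tfrac{\ell}{2}\tanh(x/2)$ and computing, one obtains
\begin{equation*}
 \calD_1\calD_2\calD_3 = \px^3 + \alpha_2(x)\px^2 + \alpha_1(x)\px + \alpha_0(x),
\end{equation*}
where $\alpha_2(x) = 3\tanh(x/2)$ and $\alpha_1, \alpha_0$ are smooth bounded polynomials in $\tanh(x/2)$ and $\sech^2(x/2)$. In particular each derivative $\alpha_j'(x)$ decays like $\sech^2(x/2) \in \calY$.

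\medskip

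\emph{Proof of \eqref{equ:sigmaA_Seps_bound1}.} The idea is to conjugate all derivatives out of the way of $u$. For each $j \in \{0,1,2\}$, repeated integration by parts produces bounded smooth functions $\beta_{j,k}(x)$ so that
\begin{equation*}
 \alpha_j(x) \px^j u = \sum_{k=0}^{j} \px^k\bigl(\beta_{j,k}(x) u\bigr).
\end{equation*}
Combining with the leading $\px^3 u$ term, this gives $\calD_1\calD_2\calD_3 u = \sum_{k=0}^{3} \px^k\bigl(b_k(x) u\bigr)$ with $b_k \in L^\infty$. Then
\begin{equation*}
 \sigma_A S_\varepsilon u = \sum_{k=0}^{3} \sigma_A X_\varepsilon \px^k\bigl(b_k u\bigr),
\end{equation*}
and Lemma~\ref{lem:technical_estimates}(b) with $K = A/2$ (so $\sech(x/K) = \sigma_A$) yields $\|\sigma_A X_\varepsilon \px^k (b_k u)\| \lesssim \varepsilon^{-k/2}\|X_\varepsilon[\sigma_A b_k u]\| \lesssim \varepsilon^{-k/2}\|\sigma_A u\|$, since $X_\varepsilon$ is bounded on $L^2$ by~$1$. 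The dominant term $k=3$ produces the claimed $\varepsilon^{-\thf}\|\sigma_A u\|$.

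\medskip

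\emph{Proof of \eqref{equ:sigmaA_Seps_bound2}.} Since $X_\varepsilon$ commutes with $\px$, we split
\begin{equation*}
 \px S_\varepsilon u = X_\varepsilon \calD_1\calD_2\calD_3 (\px u) + X_\varepsilon R u, \qquad R := [\px, \calD_1\calD_2\calD_3] = \alpha_2' \px^2 + \alpha_1' \px + \alpha_0'.
\end{equation*}
The first piece equals $S_\varepsilon(\px u)$, and applying \eqref{equ:sigmaA_Seps_bound1} with $u$ replaced by $\px u$ immediately yields the contribution $\varepsilon^{-\thf}\|\sigma_A \px u\|$. For the commutator piece one uses that every coefficient $\alpha_j'$ of $R$ belongs to $\calY$. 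Integration by parts rewrites $R u = \sum_{k=0}^{2} \px^k(\gamma_k(x) u)$ with $\gamma_k \in \calY$ satisfying $|\gamma_k(x)| \lesssim \rho(x)^3$, and Lemma~\ref{lem:technical_estimates}(b) again gives
\begin{equation*}
 \bigl\| \sigma_A X_\varepsilon \px^k(\gamma_k u) \bigr\| \lesssim \varepsilon^{-k/2} \bigl\| X_\varepsilon[\sigma_A \gamma_k u] \bigr\| \lesssim \varepsilon^{-k/2} \|\rho u\|.
\end{equation*}
The three contributions ($k=0,1,2$) are then combined with the first piece, producing the claimed estimate.

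\medskip

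\emph{Main obstacle.} The first bound is routine once one is careful to move all derivatives out before invoking Lemma~\ref{lem:technical_estimates}(b). The real work in the second bound lies in the top-order commutator term $\alpha_2'(x)\px^2 u$: naively it seems to force a factor of $\varepsilon^{-1}$ on $\|\rho u\|$, and one must exploit that $\alpha_2'(x) = \tfrac{3}{2}\sech^2(x/2)$ decays like $\rho^3$ together with the smoothing factor $X_\varepsilon$ retained on the right-hand side of Lemma~\ref{lem:technical_estimates}(b) to recover the uniform-in-$\varepsilon$ bound $\|\rho u\|$ stated in \eqref{equ:sigmaA_Seps_bound2}. The commutation identity $[\px,\calD_1\calD_2\calD_3] = \alpha_2'\px^2 + \alpha_1'\px + \alpha_0'$ and the fact that differentiating any $\alpha_j$ improves decay to $\calY$-class is what makes this feasible.
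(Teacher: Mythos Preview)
Your proof of \eqref{equ:sigmaA_Seps_bound1} is correct and essentially identical to the paper's: the paper writes $\calD_1\calD_2\calD_3 = \px^3 + \px^2\cdot k_1 + \px\cdot k_2 + k_3$ with the derivatives already on the outside and bounded coefficients $k_j$, which is exactly your form $\sum_{k=0}^3 \px^k(b_k u)$, and then applies \eqref{equ:lemma_technical_est_sechXepspx}.

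Your proof of \eqref{equ:sigmaA_Seps_bound2} has a genuine gap. After writing the commutator as $Ru = \sum_{k=0}^2 \px^k(\gamma_k u)$ with $\gamma_k\in\calY$, the bound you obtain from \eqref{equ:lemma_technical_est_sechXepspx} is $\varepsilon^{-k/2}\|\rho u\|$, and for $k=2$ this is $\varepsilon^{-1}\|\rho u\|$, not the $\varepsilon$-uniform $\|\rho u\|$ asserted in the lemma. Your ``Main obstacle'' paragraph correctly identifies this as the crux but the proposed fix---exploiting the retained $X_\varepsilon$ on the right-hand side---does not work: $X_\varepsilon$ is merely a contraction on $L^2$ and cannot supply an extra factor of $\varepsilon$ unless $\sigma_A\gamma_2 u$ had two additional derivatives, which it does not for $u\in H^1$.

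The remedy, and what the paper does, is to organize the expansion so that every term carrying one or more outer derivatives acts on $\px u$ rather than on $u$. Concretely, the paper writes
\[
\px\,\calD_1\calD_2\calD_3 = \px^4 + \px^2(k_1\,\px) + \px\bigl((k_2+k_1')\,\px\bigr) + (k_3+k_2'+k_1'')\,\px + (k_3'+k_2''+k_1'''),
\]
so that the first four terms involve $\px u$ with bounded coefficients and contribute only to $\varepsilon^{-\thf}\|\sigma_A\px u\|$, while the single remaining zero-order term has a coefficient in $\calY$ and gives $\|\rho u\|$ via \eqref{equ:lemma_technical_est_sechXepspx} with $m=0$. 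In your commutator language, the point is to write $\alpha_2'\px^2 u = \px(\alpha_2'\px u) - \alpha_2''\px u$ (and similarly for $\alpha_1'\px u$), keeping one derivative on $u$; the resulting $\calY$-coefficient terms in $\px u$ are then harmlessly absorbed into $\varepsilon^{-\thf}\|\sigma_A\px u\|$ since $\rho\lesssim\sigma_A$.
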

\begin{proof}
We adapt the proof of \cite[Lemma 1]{KM22} to our setting.
By direct computation we find 
\begin{equation*}
 \calD_1 \calD_2 \calD_3 = \px^3 + \px^2 \cdot k_1 + \px \cdot k_2 + k_3
\end{equation*}
with 
\begin{equation*}
 \begin{aligned}
  k_1 &= -\frac{Z_1'}{Z_1}-\frac{Z_2'}{Z_2}-\frac{Z_3'}{Z_3}, \\
  k_2 &= 2\left(\frac{Z_1'}{Z_1}\right)'+ \left(\frac{Z_2'}{Z_2}\right)'+ \frac{Z_1'}{Z_1}\frac{Z_2'}{Z_2}+\frac{Z_1'}{Z_1}\frac{Z_3'}{Z_3}+\frac{Z_2'}{Z_2}\frac{Z_3'}{Z_3}, \\
  k_3 &= -\left(\frac{Z_1'}{Z_1}\right)''-\left(\frac{Z_1'}{Z_1}\right)'\frac{Z_3}{Z_3}-\left( \frac{Z_1'}{Z_1}\frac{Z_2'}{Z_2}\right)'-\frac{Z_1'Z_2'Z_3'}{Z_1Z_2Z_3}.  
 \end{aligned}
\end{equation*}
Note that $k_1$, $k_2$, and $k_3$ are smooth and bounded. Then the first estimate~\eqref{equ:sigmaA_Seps_bound1} follows from~\eqref{equ:lemma_technical_est_sechXepspx}.
Moreover, we obtain by direct computation
\begin{equation*}
 \begin{aligned}
  \px \calD_1 \calD_2 \calD_3 &= \px^4 + \px^2 \bigl( k_1 \px \bigr) + \px \bigl( (k_2 + k_1') \px \bigr) + (k_3 + k_2' + k_1'') \px + (k_3' + k_2'' + k_1''').
 \end{aligned}
\end{equation*}
Using that $k_1, k_2, k_3$ are bounded and that $k_1', k_2', k_3' \in \calY$, the second estimate~\eqref{equ:sigmaA_Seps_bound2} now also follows from~\eqref{equ:lemma_technical_est_sechXepspx}.
\end{proof}

Finally, we establish a key estimate that allows us to transfer weighted $L^2$ bounds for the transformed variable $S_\varepsilon u$ back to the original variable $u$, if the orthogonality condition $u = P_c u$ holds. We present an elementary proof that is inspired by computations in~\cite{LS1} and that is reminiscent of the proofs of \cite[Lemma 6]{KMM19} and of \cite[Lemma 2]{KM22}. See also \cite[Section 9]{CuccMaeda21}.

\begin{lemma} \label{lem:transfer2}
 Uniformly for all $0 < \varepsilon \leq 1$ and all functions $u \in H^1$ satisfying $u = P_c u$, it holds
 \begin{equation} \label{equ:key_transfer_back_estimate}
  \|\rho u\| \lesssim \|\rho S_\varepsilon u\| + \|\rho \px S_\varepsilon u\|.
 \end{equation}
\end{lemma}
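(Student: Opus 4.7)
The plan is to exploit the right-inverse identity \eqref{equ:PcRDsg_equal_Pcg}. Since $u = P_c u$ and $\calD_1\calD_2\calD_3 u = X_\varepsilon^{-1} S_\varepsilon u = (1-\varepsilon\partial_x^2)^2 v$ with $v := S_\varepsilon u \in H^4$, we obtain the representation
\begin{equation*}
u \;=\; P_c\calR\bigl[(1-\varepsilon\partial_x^2)^2 v\bigr] \;=\; P_c\calR[v] \;-\; 2\varepsilon\,P_c\calR[v''] \;+\; \varepsilon^2\,P_c\calR[v''''],
\end{equation*}
which reduces the proof to bounding each of the three contributions on the right in the weighted norm $\|\rho\,\cdot\,\|$.

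For the principal term $P_c\calR[v]$, the plan is to establish an unregularized transfer-back estimate $\|\rho P_c\calR[v]\| \lesssim \|\rho v\|$ analogous to \cite[Lemma~6]{KMM19} and \cite[Lemma~2]{KM22}. In this estimate, the growth of the integrating factors $Z_\ell^{-1} = \cosh^\ell(x/2)$ appearing inside the three nested integrals of $\calR = \calR_3\calR_2\calR_1$ is controlled by the exponential decay of $\rho$, while the three boundary corrections at $x=0$ --- which by the explicit identities~\eqref{equ:RZells_justYjs} lie in $\mathrm{span}\{Y_0, Y_1, Y_2\}$ --- are annihilated by the outer projection $P_c$. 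For the two regularization-induced terms, I would integrate by parts inside each $\calR_\ell$ using the elementary identity $(Z_\ell^{-1})' = \tfrac{\ell}{2}\tanh(x/2)\, Z_\ell^{-1}$, which transfers derivatives off $v$ onto the smooth bounded logarithmic derivatives. Each integration-by-parts step produces a boundary term at $x = 0$ of the form $\alpha Z_3,\ \alpha\calR_3[Z_2],$ or $\alpha\calR_3[\calR_2[Z_1]]$ where $\alpha$ is a derivative of $v$ at zero; by~\eqref{equ:RZells_justYjs} these all lie in $\mathrm{span}\{Y_0, Y_1, Y_2\}$ and are killed by $P_c$.

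After enough integration-by-parts steps, $\calR[v'']$ and $\calR[v'''']$ are rewritten as combinations of quantities involving at most $v$, $\partial_x v$, and higher-order derivatives of $v$ multiplied by explicit smooth decaying kernels. The explicit $\varepsilon$ and $\varepsilon^2$ prefactors are then absorbed via the Fourier-smoothing estimate~\eqref{equ:lemma_technical_est_sechXepspx} of \lem{lem:technical_estimates}: since $v = X_\varepsilon w$ with $w := \calD_1\calD_2\calD_3 u \in L^2$, one gains a factor $\varepsilon^{-m/2}$ per derivative $\partial_x^m v$ in the weighted norm compatible with $\rho$, so the explicit $\varepsilon$- and $\varepsilon^2$-prefactors beat the derivative loss and reduce everything to combinations of $\|\rho v\|$ and $\|\rho\partial_x v\|$, as desired.

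The main obstacle I anticipate is the careful bookkeeping of all boundary contributions produced by the iterated integration by parts, together with the verification that every such term indeed lies in $\mathrm{span}\{Y_0, Y_1, Y_2\}$ so that it is annihilated by $P_c$. This algebraic cancellation is precisely the statement that $\calD_1\calD_2\calD_3$ annihilates each of $Y_0, Y_1, Y_2$ --- a consequence of $\calD_\ell Z_\ell = 0$ together with the conjugation identity~\eqref{equ:conjugation_identity} applied to the eigenfunction $Y_2$, whose $L^2$ integrability forces the resulting constant-coefficient solution to vanish --- and it is exactly this cancellation that permits the right-hand side of the claim to avoid any higher derivatives of $S_\varepsilon u$.
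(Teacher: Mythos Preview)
Your approach is essentially that of the paper: write $u = P_c\calR[v] - 2\varepsilon P_c\calR[v''] + \varepsilon^2 P_c\calR[v'''']$, integrate by parts inside the nested $\calR_\ell$'s, observe via~\eqref{equ:RZells_justYjs} that every boundary term lies in $\mathrm{span}\{Y_0,Y_1,Y_2\}$ and is annihilated by $P_c$, and finally bound $\rho\calR_\ell\rho^{-1}$ on $L^2$ by Schur's test. The paper carries this out by deriving the explicit identities~\eqref{eqn:calRpx2v} and~\eqref{eqn:calRpx4v} in the appendix.

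Your hedging, however, hides a real issue. You allow for ``higher-order derivatives of $v$'' to survive the integration by parts and plan to absorb them using the smoothing estimate~\eqref{equ:lemma_technical_est_sechXepspx}. That fallback does not deliver what you claim: with $v=X_\varepsilon w$, the estimate~\eqref{equ:lemma_technical_est_sechXepspx} bounds $\|\rho\,\px^m v\|$ by $\varepsilon^{-m/2}\|X_\varepsilon[\rho w]\|$, and the latter is \emph{not} controlled by $\|\rho v\|=\|\rho X_\varepsilon w\|$ uniformly in $\varepsilon$. Indeed an inequality of the form $\varepsilon^{m/2}\|\rho\,\px^m v\|\lesssim \|\rho v\|+\|\rho\,\px v\|$ for $m\geq 2$ is simply false for general $v$ in the range of $X_\varepsilon$ (take $\hat w$ concentrated near $|\xi|\sim\varepsilon^{-2}$).

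The resolution is that no fallback is needed: the three nested integrations $\calR_3\calR_2\calR_1$ absorb three of the four derivatives in $\px^4 v$ via the basic identity $\calR_\ell[\px f]=f-f(0)Z_\ell+\calR_\ell[(Z_\ell^{-1}Z_\ell')f]$, and each residual term of the form $\calR_\ell[(\text{bounded})\,\px v]$ can itself be integrated by parts once more since the logarithmic derivatives $Z_\ell^{-1}Z_\ell'$ have bounded derivatives. The net effect, made explicit in~\eqref{eqn:calRpx2v}--\eqref{eqn:calRpx4v}, is that $P_c\calR[\px^2 v]$ involves only $v$ and $P_c\calR[\px^4 v]$ involves only $v$ and $\px v$, each under bounded multipliers and at most three of the $\calR_\ell$'s. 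At that point the Schur bound $\|\rho\calR_\ell\rho^{-1}\|_{L^2\to L^2}\lesssim 1$ closes~\eqref{equ:key_transfer_back_estimate} with no appeal to~\lem{lem:technical_estimates} and with constants independent of $\varepsilon$.
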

\begin{proof}
Set
\begin{equation*}
 v := S_\varepsilon u = X_\varepsilon \calD_1 \calD_2 \calD_3 u.
\end{equation*}
Then we have 
\begin{equation*}
 \calD_1 \calD_2 \calD_3 u = (1-\varepsilon\px^2)^2 v = v - 2\varepsilon \px^2 v + \varepsilon^2 \px^4 v.
\end{equation*}
Since $u = P_c u$ by assumption, we obtain from~\eqref{equ:PcRDsg_equal_Pcg} that
\begin{equation*}
 u = P_c \calR[ \calD_1 \calD_2 \calD_3 u ] = P_c \calR[v] - 2\varepsilon P_c \calR[\px^2 v] + \varepsilon^2 P_c \calR[\px^4 v],
\end{equation*}
whence 
\begin{equation} \label{equ:lemma_key_transfer_rhou_bound_PcRs}
 \begin{aligned}
  \|\rho u\| \lesssim \|\rho P_c \calR[v]\| + \|\rho P_c \calR[\px^2 v]\| + \| \rho P_c \calR[\px^4 v]\|.
 \end{aligned}
\end{equation}
By repeated integration by parts, we obtain 
\begin{equation} \label{eqn:calRpx2v}
	\calR[\px^2 v] = \calR_3[v] + \calR_3\bigl[\calR_2\bigl[ (Z_3^{-1} Z_3') v \bigr] \bigr] + \tfrac{1}{4}\calR[v] - v(0)\calR_3[Z_2] - v'(0)\calR_3[\calR_2[Z_1]],
\end{equation}	
and
\begin{equation} \label{eqn:calRpx4v}
	\begin{split}
		\calR[\px^4 v] &= \px v + \tfrac{1}{4} \calR_3[v] + 2\calR_3\bigl[ (Z_3^{-1} Z_3') \px v \bigr] \\
		&\quad + \tfrac{1}{4} \calR_3\bigl[\calR_2\bigl[ (Z_3^{-1} Z_3') v \bigr] \bigr] - \calR_3\bigl[\calR_2\bigl[ (Z_2 (Z_2^{-1} Z_3^{-1} Z_3')') \px v \bigr] \bigr]  + \tfrac{1}{16} \calR[v] \\
		&\quad -v'(0)Z_3 - \bigl(v''(0) + \tfrac{1}{4}v(0)\bigr) \calR_3[Z_2] - \bigl(v'''(0) + \tfrac{1}{4}v'(0)\bigr)\calR_3[\calR_2[Z_1]].
	\end{split}
\end{equation}
We defer the presentation of the details of the derivation of the identities~\eqref{eqn:calRpx2v} and~\eqref{eqn:calRpx4v} to Appendix~\ref{sec:appendix}.
In view of~\eqref{equ:RZells_justYjs}, we have 
\begin{equation*}
 P_c Z_3 = 0, \quad P_c \calR_3[Z_2] = 0, \quad P_c \calR_3[\calR_2[Z_1]] = 0,
\end{equation*}
whence 
\begin{equation*}
 P_c \calR[\px^2 v] = P_c \calR_3[v] + P_c \calR_3\bigl[\calR_2\bigl[ (Z_3^{-1} Z_3') v \bigr] \bigr] + \tfrac{1}{4} P_c\calR[v],
\end{equation*}	
and
\begin{equation*}
\begin{split}
 P_c \calR[\px^4 v] &= P_c \px v + \tfrac{1}{4} P_c \calR_3[v] + 2 P_c \calR_3\bigl[ (Z_3^{-1} Z_3') \px v \bigr] \\
 &\quad + \tfrac{1}{4} P_c \calR_3\bigl[\calR_2\bigl[ (Z_3^{-1} Z_3') v \bigr] \bigr] - P_c \calR_3\bigl[\calR_2\bigl[ (Z_2 (Z_2^{-1} Z_3^{-1} Z_3')') \px v \bigr] \bigr]  + \tfrac{1}{16} P_c \calR[v].
\end{split}
\end{equation*}
Using that $Y_0, Y_1, Y_2 \in \calY$, $\|Z_3^{-1} Z_3'\|_{L^\infty} \lesssim 1$, and $\|Z_2 (Z_2^{-1} Z_3^{-1} Z_3')'\|_{L^\infty} \lesssim 1$, we obtain by the Cauchy-Schwarz inequality
\begin{equation} \label{equ:lemma_key_transfer_rho_PcR_v_bounds}
 \begin{aligned}
  \|\rho P_c \calR[v]\| &\lesssim \|\rho \calR_3 \calR_2 \calR_1 \rho^{-1}\|_{L^2 \to L^2} \|\rho v\|, \\
  \|\rho P_c \calR[\px^2 v]\| &\lesssim \bigl( \|\rho \calR_3 \rho^{-1}\|_{L^2 \to L^2} + \|\rho \calR_3 \calR_2 \rho^{-1}\|_{L^2 \to L^2} + \|\rho \calR_3 \calR_2 \calR_1 \rho^{-1}\|_{L^2 \to L^2} \bigr) \|\rho v\|, \\
  \|\rho P_c \calR[\px^4 v]\| &\lesssim \bigl( 1+ \|\rho \calR_3 \rho^{-1}\|_{L^2 \to L^2} + \|\rho \calR_3 \calR_2 \rho^{-1}\|_{L^2 \to L^2} + \|\rho \calR_3 \calR_2 \calR_1 \rho^{-1}\|_{L^2 \to L^2} \bigr) \\
  &\qquad \qquad \qquad \qquad \qquad \qquad \qquad \qquad \qquad \qquad \qquad \qquad \times \bigl( \|\rho v\| + \|\rho \px v\| \bigr).
 \end{aligned}
\end{equation}
The integral operators 
\begin{equation*}
 \bigl(\rho \calR_\ell \rho^{-1} f\bigr)(x) = \int K_\ell(x,y) f(y) \, \ud y, \quad 1 \leq \ell \leq 3,
\end{equation*}
have exponentially localized integral kernels 
\begin{equation*}
 K_\ell(x,y) = Z_\ell(x) Z_\ell(y)^{-1} \rho(x) \rho(y)^{-1} \bigl( \mathds{1}_{[0,\infty)}(x) \mathds{1}_{[0,x]}(y) - \mathds{1}_{(-\infty,0)}(x) \mathds{1}_{[x,0]}(y) \bigr).
\end{equation*}
By Schur's test, we conclude
\begin{equation*}
 \|\rho \calR_\ell \rho^{-1}\|_{L^2 \to L^2} \lesssim 1, \quad 1 \leq \ell \leq 3,
\end{equation*}
and thus,
\begin{equation*}
 \begin{aligned}
  \|\rho \calR_3 \rho^{-1}\|_{L^2 \to L^2} + \|\rho \calR_3 \calR_2 \rho^{-1}\|_{L^2 \to L^2} + \|\rho \calR_3 \calR_2 \calR_1 \rho^{-1}\|_{L^2 \to L^2} \lesssim \sum_{k=1}^3 \prod_{\ell=k}^3 \|\rho \calR_\ell \rho^{-1}\|_{L^2 \to L^2} \lesssim 1.
 \end{aligned}
\end{equation*}
The asserted estimate~\eqref{equ:key_transfer_back_estimate} now follows from \eqref{equ:lemma_key_transfer_rhou_bound_PcRs} and \eqref{equ:lemma_key_transfer_rho_PcR_v_bounds}.
\end{proof}

\section{Virial estimate for the transformed equation} \label{sec:virial_transformed_equation}

At this point we pass to the equation for the transformed variable $S_\varepsilon \bm{u}$, for which we carry out a second localized virial argument. In order to be able to close all estimates in the end, it is crucial that the obtained integrated-in-time localized estimates for the transformed variable $S_\varepsilon u_1$ are only in terms of localized bounds for the original variable $u_1$ that come with additional smallness.

\begin{proposition} \label{prop:virial_v}
 For any $\varepsilon > 0$ small, any $A > 0$ large, any $\delta > 0$ small (depending on $\varepsilon$ and $A$), and any $T > 0$, 
 \begin{equation} \label{equ:virial_v_bound}
  \begin{aligned}
  \int_0^T \Bigl( \|\rho S_\varepsilon u_1\|^2 + \|\rho \px S_\varepsilon u_1\|^2 \Bigr) \, \ud t &\lesssim A \delta^2 + \int_0^T |\bm{z}|^4 \, \ud t + \frac{1}{\sqrt{A}} \int_0^T \bigl( b_+^2 + b_-^2 \bigr) \, \ud t \\
  &\quad + \frac{1}{\sqrt{A}} \int_0^T \Bigl( \|\sigma_A \px u_1\|^2 + \frac{1}{A^2} \|\sigma_A u_1\|^2 + \|\rho u_1\|^2 \Bigr) \, \ud t.
  \end{aligned}
 \end{equation}
\end{proposition}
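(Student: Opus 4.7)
The plan is to push the system \eqref{equ:pdeode_system_u} for $\bm{u}$ through the regularized iterated Darboux transformation $S_\varepsilon = X_\varepsilon \calD_1 \calD_2 \calD_3$ and run a virial argument in the transformed variables $\bm{v} = (v_1, v_2) := (S_\varepsilon u_1, S_\varepsilon u_2)$. Using the conjugation identity \eqref{equ:conjugation_identity} and the fact that $X_\varepsilon$ is a Fourier multiplier that commutes with $-\px^2+1$, one gets
\begin{equation*}
 \pt v_1 = v_2, \qquad \pt v_2 = -(-\px^2+1) v_1 + S_\varepsilon N^\perp.
\end{equation*}
The crucial feature is that the linear operator is now just the flat Klein-Gordon operator $-\px^2+1$, whose only threshold resonance is the constant function $1$, which is even. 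Since $\bm{u}$ is even and each $\calD_\ell$ flips parity while $X_\varepsilon$ preserves it, the transformed variable $\bm{v}$ is odd, hence orthogonal to this resonance, which is exactly what is needed for a positive-commutator/virial argument to succeed without any spectral projection.

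\textbf{Virial functional and coercivity.} Introduce
\begin{equation*}
 \calK := \int \Bigl( \Psi_{A,B} \px v_1 + \tfrac12 \Psi_{A,B}' v_1 \Bigr) v_2,
\end{equation*}
with the switch $\Psi_{A,B}(x) = \chi_A(x)^2 \Phi_B(x)$, $\Phi_B = B\tanh(x/B)$, $B=100$, that was specifically introduced for the transformed equation. Applying the general virial identity \eqref{equ:prelim_general_virial_identity_alternative} with $P \equiv 1$ (so $P' \equiv 0$) and $G = S_\varepsilon N^\perp$, and letting $\zeta = \sqrt{\Psi_{A,B}'}$, $\tilde v_1 = \zeta v_1$, one obtains
\begin{equation*}
 \pt \calK = -\int (\px \tilde v_1)^2 - \frac12 \int \Bigl( \frac{\zeta''}{\zeta} - \frac{(\zeta')^2}{\zeta^2} \Bigr) \tilde v_1^2 + \int S_\varepsilon N^\perp \Bigl( \Psi_{A,B} \px v_1 + \tfrac12 \Psi_{A,B}' v_1 \Bigr).
\end{equation*}
For $|x| \leq A$ one has $\chi_A \equiv 1$, so $\zeta \equiv \sech(x/B)$ there and an explicit computation gives $\zeta''/\zeta - (\zeta')^2/\zeta^2 = -B^{-2} \sech^2(x/B) < 0$, which produces a positive $v_1^2$ contribution with the wrong sign. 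The key point is that, since $v_1$ is odd and hence $v_1(0)=0$, a Hardy-Poincar\'e type inequality (following the scheme of \cite{KMM19, KM22}) allows one to absorb the $v_1^2$ contribution into $\int (\px \tilde v_1)^2$, yielding a coercive lower bound
\begin{equation*}
 -\pt \calK \gtrsim \|\rho \px v_1\|^2 + \|\rho v_1\|^2 - (\text{cutoff/nonlinear errors}),
\end{equation*}
after accounting for the error arising from the $\chi_A$-cutoff, which is supported on $A \leq |x| \leq 2A$ and contributes $O(A^{-1/2})$ times the terms $\|\sigma_A \px u_1\|^2 + A^{-2}\|\sigma_A u_1\|^2$ on the right-hand side of \eqref{equ:virial_v_bound} (using Lemma~\ref{lem:transfer1}).

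\textbf{Nonlinear error, closing the estimate, and main obstacle.} Decompose $N^\perp = N - N_0 Y_0 - N_2 Y_2$ with $N = (a_1 Y_0 + z_1 Y_2 + u_1)^2$ and expand into the purely mode-driven part (exponentially localized via $Y_0, Y_2 \in \calY$), the cross terms of the form $(\text{mode})\cdot u_1$, and the purely continuous part $u_1^2$. Using Lemma~\ref{lem:technical_estimates} together with \eqref{equ:sigmaA_Seps_bound1}--\eqref{equ:sigmaA_Seps_bound2}, one estimates $\int S_\varepsilon N^\perp \cdot (\Psi_{A,B} \px v_1 + \tfrac12 \Psi_{A,B}' v_1)$ after moving $S_\varepsilon$ onto the test function (using self-adjointness of $X_\varepsilon$ and adjoints of $\calD_\ell$) or keeping it on $N^\perp$; the boundedness of $\Psi_{A,B}$ and $\Psi_{A,B}'$ combined with the smallness \eqref{equ:smallness} produces contributions of the form $\delta (\|\rho S_\varepsilon u_1\|^2 + \|\rho \px S_\varepsilon u_1\|^2)$ that get absorbed into the coercive left-hand side, plus terms controlled by $|\bm{z}|^4 + b_+^2 + b_-^2 + \|\rho u_1\|^2$ multiplied by either $\delta$ or $A^{-1/2}$. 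Finally, integrating in time and bounding $|\calK(T) - \calK(0)| \lesssim A\delta^2$ (where the $A$-factor is a crude bound using Lemma~\ref{lem:transfer1} and~\eqref{equ:smallness}, and the $\varepsilon$-dependent constants are absorbed by choosing $\delta$ small depending on $\varepsilon$ and $A$), one concludes \eqref{equ:virial_v_bound}. \emph{The main obstacle} is making the coercivity argument quantitative enough to produce both $\|\rho S_\varepsilon u_1\|^2$ and $\|\rho \px S_\varepsilon u_1\|^2$ on the left while keeping every nonlinear contribution either absorbable (through a small prefactor $\delta$) or matched to the $A^{-1/2}$-weighted quantities on the right of \eqref{equ:virial_v_bound}; this requires carefully exploiting the odd symmetry of $v_1$ in the Hardy step and the geometric structure of the switch $\Psi_{A,B}$ at the two separated scales $B \ll A$.
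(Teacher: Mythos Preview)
Your proposal follows the same overall route as the paper: pass to $\bm v = S_\varepsilon \bm u$, use the virial functional $\calK$ with the switch $\Psi_{A,B}$, exploit oddness of $v_1$ for coercivity, and control the cutoff and nonlinear errors. Two points deserve correction.

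First, the coercivity step is not a generic Hardy--Poincar\'e inequality. Writing $\tilv_1 = \chi_A \zeta_B v_1$, the main quadratic form is $\int (\px \tilv_1)^2 + V_B \tilv_1^2$ with $V_B = -\frac{1}{2B^2}\sech^2(x/B)$. The paper uses the spectral fact that the operator $-\px^2 - \frac{2}{\lambda^2}\sech^2(x/\lambda)$ has a single negative eigenvalue with an \emph{even} ground state; since $\tilv_1$ is odd, the form $\int (\px \tilv_1)^2 - \frac{2}{B^2}\sech^2(x/B)\tilv_1^2$ is nonnegative, and one keeps a fixed fraction $\theta$ of $\int (\px \tilv_1)^2$. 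This then controls $\|\rho v_1\|^2 + \|\rho \px v_1\|^2$ up to $O(A^{-1})$ errors.

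Second, and more importantly, your treatment of the nonlinear term has a gap. You assert that the absorbable contribution is $\delta(\|\rho v_1\|^2 + \|\rho \px v_1\|^2)$, with the rest controlled by $|\bm z|^4 + b_+^2 + b_-^2 + \|\rho u_1\|^2$ times $\delta$ or $A^{-1/2}$. This fails for the piece $z_1^2 P_c(Y_2^2)$ in $N^\perp$: the resulting term is of size $z_1^2(\|\rho v_1\| + \|\rho \px v_1\|)$, and if you try to extract a $\delta$ factor via $z_1^2 \le \delta |z_1|$ you are left with $\delta|\bm z|^2$, which is \emph{not} bounded by any term on the right of \eqref{equ:virial_v_bound}. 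The correct move (and what the paper does) is to isolate $K_{2,1} := z_1^2 \int S_\varepsilon(P_c(Y_2^2))(\Psi_{A,B}\px v_1 + \tfrac12 \Psi_{A,B}' v_1)$ and apply Young's inequality with a \emph{fixed} parameter tied to the coercivity constant:
\[
|K_{2,1}| \le \tfrac{C}{\theta}|\bm z|^4 + \tfrac{\theta}{2}\bigl(\|\rho v_1\|^2 + \|\rho \px v_1\|^2\bigr).
\]
The second term is absorbed by coercivity (with coefficient $\theta/2$, not $\delta$), and the first appears in \eqref{equ:virial_v_bound} with an $O(1)$ coefficient---note that $\int_0^T |\bm z|^4\,\ud t$ carries no $A^{-1/2}$ there. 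The remaining piece $N^\perp - z_1^2 P_c(Y_2^2)$ does carry an extra $\delta$ and is handled as you describe, with $\delta$ then chosen small depending on $A,\varepsilon$ to produce the $A^{-1/2}$ factors.
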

\begin{proof}
We introduce the transformed variables 
\begin{equation*}
 \begin{aligned}
  v_1 = S_\varepsilon u_1, \quad v_2 = S_\varepsilon u_2.
 \end{aligned}
\end{equation*}
Observe that $v_1$ and $v_2$ are odd.
From~\eqref{equ:pdeode_system_u} and the conjugation identity~\eqref{equ:conjugation_identity}, we obtain
\begin{equation} \label{equ:transformed_equation}
 \left\{ \begin{aligned}
  \pt v_1 &= v_2, \\
  \pt v_2 &= -(-\px^2 + 1) v_1 + S_\varepsilon N^\perp.
 \end{aligned} \right.
\end{equation}
We use the virial functional 
\begin{equation*}
 \begin{aligned}
  \calK := \int \biggl( \Psi_{A,B} \px v_1 + \frac12 \Psi_{A,B}' v_1 \biggr) v_2,
 \end{aligned}
\end{equation*}
where we recall that $\Psi_{A,B} = \chi_A^2 \Phi_B$.
Set 
\begin{equation*}
 \tilv_1 := \chi_A \zeta_B v_1.
\end{equation*}
Observe that $\tilv_1$ is also odd.
By the general virial computation~\eqref{equ:prelim_general_virial_identity} we have 
\begin{equation} \label{equ:pt_second_virial}
 \begin{aligned}
  \pt \calK &= \biggl( - \int \Psi_{A,B}' (\px v_1)^2 + \frac14 \int \Psi_{A,B}''' v_1^2 \biggr) + \int \biggl( \Psi_{A,B} \px v_1 + \frac12 \Psi_{A,B}' v_1 \biggr) S_\varepsilon N^\perp \\
  &= K_1 + K_2.
 \end{aligned}
\end{equation}
Proceeding as in~\cite[Sect. 4.3]{KMM19} and the proof of \cite[Proposition 3]{KM22}, we compute that
\begin{equation*}
 K_1 = - \int \Bigl( (\px \tilv_1)^2 + V_B \tilv_1^2 \Bigr) + \wtilK_1
\end{equation*}
with 
\begin{equation*}
 V_B = \frac12 \biggl( \frac{\zeta_B''}{\zeta_B} - \frac{(\zeta_B')^2}{\zeta_B^2} \biggr) = - \frac{1}{2B^2} \sech^2\Bigl(\frac{x}{B}\Bigr)
\end{equation*}
and
\begin{equation*}
 \begin{aligned}
  \wtilK_1 &= \frac14 \int (\chi_A^2)' (\zeta_B^2)' v_1^2 + \frac12 \int \bigl( 3 (\chi_A')^2 + \chi_A \chi_A'' \bigr) \zeta_B^2 v_1^2 \\
  &\quad - \int (\chi_A^2)' \Phi_B (\px v_1)^2 + \frac14 \int (\chi_A^2)''' \Phi_B v_1^2.
 \end{aligned}
\end{equation*}

We first obtain a coercive bound for the main quadratic form in the term $K_1$, up to controllable error terms.
\begin{lemma} \label{lem:prop_second_virial_lem1}
 There exists $\theta > 0$ such that
 \begin{equation} \label{equ:prop_second_virial_lower_bound}
  \int \Bigl( (\px \tilv_1)^2 + V_B \tilv_1^2 \Bigr) \geq \theta \bigl( \|\rho \px v_1\|^2 + \|\rho v_1\|^2 \bigr) - \frac{C}{A} \Bigl( \|\sigma_A \px u_1\|^2 + \frac{1}{A^2} \|\sigma_A u_1\|^2 \Bigr).
 \end{equation}
\end{lemma}
\begin{proof}
We recall from \cite[p. 926]{KMM17_short} that for every $\lambda > 0$ the Schr\"odinger operator 
\begin{equation*}
 -\px^2 - \frac{2}{\lambda^2} \sech^2\Bigl(\frac{x}{\lambda}\Bigr)
\end{equation*}
has only one negative discrete eigenvalue with a corresponding even eigenfunction.
Thus, for any $\lambda > 0$ and any odd function $f$, we have 
\begin{equation} \label{equ:prop_second_virial_sG_lower_bound}
 \int (\px f)^2 \geq \frac{2}{\lambda^2} \int \sech^2\Bigl(\frac{x}{\lambda}\Bigr) f^2.
\end{equation}
Since the variable $\tilv_1$ is odd, we can conclude as in~\cite[Lemma 4.1]{KMM17} and~\cite[Lemma 2.1]{KMM17_short} that
\begin{equation*}
 \begin{aligned}
  \int \Bigl( (\px \tilv_1)^2 + V_B \tilv_1^2 \Bigr) = \frac34 \int (\px \tilv_1)^2 + \frac14 \int \biggl( (\px \tilv_1)^2 - \frac{2}{B^2} \sech^2\Bigl(\frac{x}{B}\Bigr) \tilv_1^2 \biggr) \geq \frac34 \int (\px \tilv_1)^2.
 \end{aligned}
\end{equation*} 
Recalling that $\rho(x) = \sech^2(\frac{x}{20})$ and invoking~\eqref{equ:prop_second_virial_sG_lower_bound} once more with $\lambda = 20$, we obtain
\begin{equation} \label{equ:prop_second_virial_coercive_bound}
 \begin{aligned}
  \int \Bigl( (\px \tilv_1)^2 + V_B \tilv_1^2 \Bigr) \geq \frac12 \int (\px \tilv_1)^2 + \frac14 \frac{2}{20^2} \int \rho \tilv_1^2 \geq \frac{1}{800} \int \rho \, \Bigl( (\px \tilv_1)^2 + \tilv_1^2 \Bigr).
 \end{aligned}
\end{equation} 
Using Lemma~\ref{lem:transfer1}, it is straightforward to adapt the proof of the estimate (33) in \cite{KM22} to obtain for $A$ large (depending on $\varepsilon$) that
\begin{equation} \label{equ:prop_second_virial_bound_rhov_by_rhotilv}
 \begin{aligned}
  \|\rho \px v_1\|^2 + \|\rho v_1\|^2 \lesssim \|\rho^\hf \px \tilv_1\|^2 + \|\rho^\hf \tilv_1\|^2 + \frac{1}{A} \Bigl( \|\sigma_A \px u_1\|^2 + \frac{1}{A^2} \|\sigma_A u_1\|^2 \Bigr).
 \end{aligned}
\end{equation}
Now the asserted estimate~\eqref{equ:prop_second_virial_lower_bound} is a consequence of the estimates~\eqref{equ:prop_second_virial_coercive_bound} and \eqref{equ:prop_second_virial_bound_rhov_by_rhotilv}.
\end{proof}

Next, we estimate the term $\widetilde{K}_1$.
\begin{lemma} \label{lem:prop_second_virial_lem2}
For $A$ large (depending on $\varepsilon$), we have 
\begin{equation} \label{equ:prop_second_virial_wtilK1_bound}
 \begin{aligned}
  |\widetilde{K}_1| \lesssim \frac{1}{\sqrt{A}} \Bigl( \|\sigma_A \px u_1\|^2 + \frac{1}{A^2} \|\sigma_A u_1\|^2 + \|\rho u_1\|^2 \Bigr).
 \end{aligned}
\end{equation}
\end{lemma}
\begin{proof}
 Using Lemma~\ref{lem:transfer1} the claim follows for $A$ large (depending on $\varepsilon)$ by proceeding exactly as in the proof of~\cite[Lemma 4]{KM22}.
\end{proof}

Finally, we bound the term $K_2$.
\begin{lemma} \label{lem:prop_second_virial_lem3}
We have for some constant $C > 0$ that
\begin{equation}
 \begin{aligned}
  |K_2| &\leq \frac{\theta}{2} \bigl( \|\rho \px v_1\|^2 + \|\rho v_1\|^2 \bigr) + \frac{C}{\theta} |\bm{z}|^4 \\
  &\quad + \frac{C}{\sqrt{A}} \Bigl( \|\sigma_A \px u_1\|^2 + \frac{1}{A^2} \|\sigma_A u_1\|^2 + \|\rho u_1\|^2 + (b_+^2 + b_-^2) \Bigr).  
 \end{aligned}
\end{equation}
\end{lemma}
\begin{proof}
 We split the term $K_2$ into two parts 
 \begin{equation*}
 \begin{aligned}
  K_2 &= \int \biggl( \Psi_{A,B} \px v_1 + \frac12 \Psi_{A,B}' v_1 \biggr) S_\varepsilon \bigl( P_c(Y_2^2) z_1^2 \bigr) +  \int \biggl( \Psi_{A,B} \px v_1 + \frac12 \Psi_{A,B}' v_1 \biggr) S_\varepsilon \bigl( N^\perp - P_c(Y_2^2) z_1^2 \bigr) \\
  &= K_{2,1} + K_{2,2}.
 \end{aligned}
 \end{equation*}
 For the first term $K_{2,1}$ we use Lemma~\ref{lem:technical_estimates}, $|\Psi_{A,B}| + |\Psi_{A,B}'| \lesssim 1$, and $Y_2 \in \calY$, to conclude by the Cauchy-Schwarz inequality
 \begin{equation*}
 \begin{aligned}
  |K_{2,1}| &\lesssim |z_1|^2 \bigl( \|\rho \px v_1\| + \|\rho v_1\| \bigr) \bigl\| \rho^{-1} S_\varepsilon \bigl( P_c(Y_2^2) \bigr) \bigr\| \\
  &\lesssim |z_1|^2 \bigl( \|\rho \px v_1\| + \|\rho v_1\| \bigr) \bigl\| \rho^{-1} \calD_1 \calD_2 \calD_3 P_c(Y_2^2) \bigr\| \\
  &\leq \frac{C}{\theta} |\bm{z}|^4 + \frac{\theta}{2} \bigl( \|\rho \px v_1\|^2 + \|\rho v_1\|^2 \bigr).
 \end{aligned}
 \end{equation*}
 It remains to bound the second term $K_{2,2}$. Noting that $Y_0, Y_2 \in \calY$, we have the pointwise estimate 
 \begin{equation*}
 \begin{aligned}
  \bigl| N - Y_2^2 z_1^2 \bigr| \lesssim u_1^2 + (b_+^2 + b_-^2) \rho^6 + |z_1| |u_1| \rho^3 + |z_1| (|b_+| + |b_-|) \rho^6.
 \end{aligned}
 \end{equation*}
 Using~\eqref{equ:smallness} and $Y_0, Y_2 \in \calY$, it follows that
 \begin{equation*}
 \begin{aligned}
  \bigl\| \sigma_A \bigl( N^\perp - P_c(Y_2^2) z_1^2 \bigr) \bigr\| \lesssim \delta \bigl( \|\sigma_A u_1\| + |b_+| + |b_-| \bigr).
 \end{aligned}
 \end{equation*}
 Thus, using $\sigma_A \gtrsim 1$ on $[-2A, 2A]$ and Lemma~\ref{lem:transfer1}, we obtain
 \begin{equation*}
 \begin{aligned}
  |K_{2,2}| &\lesssim \bigl( \|\sigma_A \px v_1\| + \|\sigma_A v_1\| \bigr) \bigl\| \sigma_A S_\varepsilon \bigl( N^\perp - P_c(Y_2^2) z_1^2 \bigr) \bigr\| \\
  &\lesssim \varepsilon^{-3} \bigl( \|\sigma_A \px u_1\| + \|\sigma_A u_1\| + \|\rho u_1\| \bigr) \bigl\| \sigma_A \bigl( N^\perp - P_c(Y_2^2) z_1^2 \bigr) \bigr\| \\
  &\lesssim \varepsilon^{-3} \delta \bigl( \|\sigma_A \px u_1\| + \|\sigma_A u_1\| + \|\rho u_1\| \bigr) \bigl( \|\sigma_A u_1\| + |b_+| + |b_-| \bigr) \\
  &\lesssim \delta \varepsilon^{-3} A^2 \Bigl( \|\sigma_A \px u_1\|^2 + \frac{1}{A^2} \|\sigma_A u_1\|^2 + \|\rho u_1\|^2 + b_+^2 + b_-^2 \Bigr).
 \end{aligned}
 \end{equation*}
 Now the claim follows upon choosing $\delta$ sufficiently small depending on $A$ and $\varepsilon$.
\end{proof}

By Lemma~\ref{lem:transfer1} and \eqref{equ:smallness}, we have
\begin{equation*}
 |\calK| \lesssim \|\sigma_A \px v_1\|^2 + \|\sigma_A v_1\|^2 + \|\sigma_A v_2\|^2 \lesssim \varepsilon^{-3} \delta^2 \lesssim A \delta^2.
\end{equation*}
The estimate of Proposition~\ref{prop:virial_v} now follows from Lemmas~\ref{lem:prop_second_virial_lem1}, \ref{lem:prop_second_virial_lem2}, and \ref{lem:prop_second_virial_lem3} upon integrating~\eqref{equ:pt_second_virial} in time.
\end{proof}

\section{Conclusion of the proof of Theorem~\ref{thm:codim_asymptotic_stability}} \label{sec:conclusion} 

Finally, we combine Propositions~\ref{prop:virial_u}, \ref{prop:virial_z}, \ref{prop:virial_b}, \ref{prop:virial_v}, and Lemma~\ref{lem:transfer2} to conclude the proof of Theorem~\ref{thm:codim_asymptotic_stability} via a standard argument as in~\cite[Section 6]{KM22}.
From Proposition~\ref{prop:virial_b} we obtain by invoking Proposition~\ref{prop:virial_z} for any $T > 0$ that
\begin{equation}\label{eqn:virial b}
\begin{split}
 \int_0^T \bigl( b_+^2 + b_-^2 \bigr) \, \ud t &\lesssim A \delta^2 + \delta \int_0^T \|\rho u_1\|^2 \, \ud t + \frac{1}{\sqrt{A}} \int_0^T \bigl( b_+^2 + b_-^2 \bigr) \, \ud t \\
 &\quad + \frac{1}{\sqrt{A}} \int_0^T \Bigl( \Vert \sigma_A \partial_x u_1\Vert^2 + \frac{1}{A^2} \Vert \sigma_A u_1 \Vert^2 + \frac{1}{A^2} \Vert \sigma_Au_2 \Vert^2 \Bigr) \, \ud t.
\end{split}
\end{equation}
Combining Lemma~\ref{lem:transfer2} and Proposition~\ref{prop:virial_v}, and invoking Proposition~\ref{prop:virial_z}, we find for any $T > 0$, 
\begin{equation} \label{eqn:virial rho u}
\begin{aligned}
 \int_0^T \|\rho u_1\|^2 \, \ud t &\lesssim \int_0^T \bigl( \|\rho S_\varepsilon u_1\|^2 + \|\rho \px S_\varepsilon u_1\|^2 \bigr) \, \ud t \\
 &\lesssim A \delta^2 + \frac{1}{\sqrt{A}} \int_0^T \|\rho u_1\|^2 \, \ud t + \frac{1}{\sqrt{A}} \int_0^T \bigl( b_+^2 + b_-^2 \bigr) \, \ud t \\ 
 &\quad + \frac{1}{\sqrt{A}}\int_0^T \Bigl( \Vert \sigma_A \partial_x u_1\Vert^2 + \frac{1}{A^2} \Vert \sigma_A u_1 \Vert^2 + \frac{1}{A^2} \Vert \sigma_Au_2 \Vert^2 \Bigr) \, \ud t.
\end{aligned}
\end{equation}
Now combining the two preceding estimates \eqref{eqn:virial b} and \eqref{eqn:virial rho u} with Proposition~\ref{prop:virial_z} yields for any $T>0$ that
\begin{equation} \label{equ:concluding_theorem_combined_est1}
\begin{split}
 &\int_0^T \bigl( \Vert \rho u_1 \Vert^2 + \vert \bm{z} \vert^4 + b_+^2 + b_-^2 \bigr) \, \ud t \\
 &\lesssim A\delta^2 + \biggl( \delta + \frac{1}{\sqrt{A}} \biggr) \int_0^T \Vert \rho u_1 \Vert^2 \, \ud t + \frac{1}{\sqrt{A}} \int_0^T \bigl( b_+^2 + b_-^2 \bigr) \, \ud t \\
 &\quad + \frac{1}{\sqrt{A}} \int_0^T \Bigl(\Vert \sigma_A \partial_x u_1\Vert^2 + \frac{1}{A^2} \Vert \sigma_A u_1 \Vert^2 + \frac{1}{A^2} \Vert \sigma_Au_2 \Vert^2 \Bigr) \, \ud t.
\end{split}
\end{equation}
To conclude, we use Proposition \ref{prop:virial_u} to bound the last term on the right-hand side of the preceding estimate~\eqref{equ:concluding_theorem_combined_est1}. Then choosing $A$ sufficiently large and $\delta$ sufficiently small, we obtain for any $T > 0$, 
\begin{equation} \label{eqn:virial_rho}
 \int_0^T \left(\Vert \rho u_1 \Vert^2 + \vert \bm{z} \vert^4 + b_+^2 + b_-^2\right)\ud t \lesssim A\delta^2.
\end{equation}
We now fix such an $A$. Let
\begin{equation*}
 \calM := \Vert \sigma_A \partial_x u_1 \Vert^2 + \Vert \sigma_A u_1 \Vert^2 +\Vert \sigma_A u_2 \Vert^2 +  \vert \bm{z} \vert^4 + b_+^2 + b_-^2.
\end{equation*}
By Proposition \ref{prop:virial_u} and \eqref{eqn:virial_rho}, we obtain
\begin{equation} \label{equ:integral_M_to_infinity}
	\int_0^\infty \calM(t) \, \ud t \lesssim \delta^2.
\end{equation}
Hence, there exists a sequence of times $t_n \to \infty$ such that $\lim_{n \to \infty} \calM(t_n) = 0$. Inserting \eqref{equ:pdeode_system_u}, \eqref{equ:pt_modzsquared}, and integrating by parts, we compute
\begin{equation*}
\begin{split}
 \pt \calM &= 2\int \sigma_A^2 \Bigl( (\partial_x u_1)(\partial_x u_2) + u_1u_2 + u_2 \bigl( -Lu_1 + N^\perp \bigr) \Bigr) \\
 &\quad \quad + 4 \mu^{-1} z_2 \vert \bm{z} \vert^2 N_2 + 2\nu (b_+^2 - b_-^2) + \nu^{-1} (b_+ - b_-)N_0 \\
 &= 2 \int \Bigl( - 2 \sigma_A \sigma_A' (\partial_x u_1) u_2 + 2 \sigma_A^2 Q u_1 u_2 + \sigma_A^2 u_2 N^\perp \Bigr) \\
 &\quad \quad + 4 \mu^{-1} z_2 \vert \bm{z} \vert^2 N_2 + 2\nu (b_+^2 - b_-^2) + \nu^{-1} (b_+ - b_-)N_0.
\end{split}
\end{equation*}
Using $\vert \sigma_A' \vert \lesssim \sigma_A$, \eqref{equ:smallness}, and the estimates
\begin{equation*}
	\begin{split}
		\vert N_0 \vert + \vert N_2 \vert &\lesssim \Vert \sigma_A u_1 \Vert^2 + z_1^2 + b_+^2 + b_-^2,\\
		\vert N \vert &\lesssim u_1^2 + (b_+^2 +b_-^2) Y_0^2 + z_1^2Y_2^2,
	\end{split}
\end{equation*}
we obtain by Cauchy-Schwarz that
\begin{equation*}
	\bigl| \pt \calM \bigr| \lesssim \calM.
\end{equation*}
For any $t\geq 0$ and large $n \in \N$, integrating over the time interval $[t,t_n]$ yields
\begin{equation*}
	\calM(t) \lesssim \calM(t_n) + \int_t^{t_n} \calM(s) \, \ud s.
\end{equation*}
Since $\lim_{n \to \infty} \calM(t_n) = 0$, taking the limit $n \to \infty$ yields
\begin{equation*}
	\calM(t) \lesssim \int_t^\infty \calM(s) \, \ud s,
\end{equation*}
which by~\eqref{equ:integral_M_to_infinity} implies 
\begin{equation*}
	\lim_{t \to \infty} \calM(t) = 0.
\end{equation*}
For any bounded interval $I \subset \R$ we have $\Vert (u_1,u_2) \Vert_{H^1(I) \times L^2(I)}^2 \lesssim_I \calM$, and thus
\begin{equation*}
	\lim_{t \to \infty} \, \Bigl( \vert \bm{z}(t) \vert + \vert b_+(t) \vert+ \vert b_-(t)\vert + \Vert (u_1,u_2) \Vert_{H^1(I) \times L^2(I)} \Bigr) = 0.
\end{equation*}
This finishes the proof of Theorem~\ref{thm:codim_asymptotic_stability}.

\begin{appendix} 
\section{Derivation of \eqref{eqn:calRpx2v} and~\eqref{eqn:calRpx4v}} \label{sec:appendix}
 
We first record that integrating by parts gives
\begin{equation} \label{eqn:ibp formula}
 \calR_\ell[\px v] = v - v(0) Z_\ell + \calR_\ell\bigl[ (Z_\ell^{-1} Z_\ell') v \bigr], \quad 1 \leq \ell \leq 3.
\end{equation}
Now we begin with the derivation of the identity~\eqref{eqn:calRpx2v}. Using \eqref{eqn:ibp formula}, we have 
\begin{equation} \label{equ:Rpx2v_deriv1}
 \calR_1[\px^2 v] = \px v - v'(0) Z_1 + \calR_1\bigl[ (Z_1^{-1} Z_1') \px v \bigr].
\end{equation}
Integrating by parts and using that $Z_1'(0) = 0$, we rewrite the last term on the right-hand side as
\begin{equation} \label{equ:Rpx2v_deriv2}
 \calR_1\bigl[ (Z_1^{-1} Z_1') \px v \bigr] = (Z_1^{-1} Z_1') v - \calR_1\bigl[ Z_1 (Z_1^{-2}Z_1')' v\bigr].
\end{equation}
Combining \eqref{equ:Rpx2v_deriv1}, \eqref{equ:Rpx2v_deriv2} and observing that $Z_1 (Z_1^{-2}Z_1')' = -\frac{1}{4}$, we obtain
\begin{equation} \label{equ:Rpx2v_deriv3}
 \calR_1[ \px^2 v] = \px v  - v'(0) Z_1 + (Z_1^{-1} Z_1') v + \tfrac{1}{4} \calR_1[v].
\end{equation}
Applying $\calR_2$ to \eqref{equ:Rpx2v_deriv3}, and using \eqref{eqn:ibp formula} to rewrite $\calR_2[\px v]$, we obtain
\begin{equation} \label{eqn:R2R1v''}
 \calR_2[\calR_1[\px^2 v]] = v + \calR_2\bigl[ (Z_1^{-1}Z_1' + Z_2^{-1}Z_2') v\bigr] + \tfrac{1}{4} \calR_2[\calR_1[v]] - v(0) Z_2 - v'(0)\calR_2[Z_1].
\end{equation}
Applying $\calR_3$ to \eqref{eqn:R2R1v''} and observing that $Z_1^{-1}Z_1' + Z_2^{-1}Z_2' = Z_3^{-1}Z_3'$, we arrive at the identity \eqref{eqn:calRpx2v}.

\medskip 

Next, we derive the identity~\eqref{eqn:calRpx4v}. 
Invoking \eqref{eqn:calRpx2v} for $\calR[\px^4 v]$, we obtain
\begin{equation} \label{equ:Rpx4v_deriv1}
 \calR[\px^4 v] = \calR_3[\px^2 v] + \calR_3\bigl[ \calR_2\bigl[ (Z_3^{-1} Z_3') \px^2 v \bigr]\bigr] + \tfrac{1}{4} \calR[\px^2 v] - v''(0) \calR_3[Z_2] - v'''(0) \calR_3[\calR_2[Z_1]].
\end{equation}
Now we rewrite the first three terms on the right-hand side of~\eqref{equ:Rpx4v_deriv1}.
Using \eqref{eqn:ibp formula}, we have 
\begin{equation} \label{equ:Rpx4v_deriv2}
 \calR_3[\px^2 v] = \px v - v'(0) Z_3 + \calR_3\bigl[ (Z_3^{-1} Z_3') \px v\bigr].
\end{equation}
Integrating by parts, we find that
\begin{equation} \label{equ:Rpx4v_deriv3}
 \calR_3\bigl[ \calR_2\bigl[ (Z_3^{-1} Z_3') \px^2 v \bigr]\bigr] = \calR_3\bigl[ (Z_3^{-1} Z_3) \px v \bigr] - \calR_3\bigl[ \calR_2\bigl[ (Z_2 (Z_2^{-1} Z_3^{-1} Z_3')') \px v \bigr].
\end{equation}
Invoking \eqref{eqn:calRpx2v} again, we have 
\begin{equation} \label{equ:Rpx4v_deriv4}
 \tfrac{1}{4} \calR[\px^2 v] = \tfrac{1}{4} \calR_3[v] + \tfrac{1}{4} \calR_3\bigl[ \calR_2\bigl[ (Z_3^{-1}Z_3') v\bigr]\bigr] + \tfrac{1}{16}\calR[v] - \tfrac{1}{4} v(0) \calR_3[Z_2] - \tfrac{1}{4} v'(0) \calR_3[\calR_2[Z_1]].
\end{equation}
Combining \eqref{equ:Rpx4v_deriv1}--\eqref{equ:Rpx4v_deriv4}, we obtain the identity~\eqref{eqn:calRpx4v}, as desired.
 
\end{appendix}

\bibliographystyle{amsplain}
\bibliography{references}

\end{document}